\begin{document}
%%%%%%%%%%%%%%%%%%%%%%%%%%%%%%%%%%%%%%%%%%%%%%%%%%%%%%%%%%%%%%%%%%%%%%
%	spaces for your own definitions follows
%%%%%%%%%%%%%%%%%%%%%%%%%%%%%%%%%%%%%%%%%%%%%%%%%%%%%%%%%%%%%%%%%%%%%%
\newtheorem{theo}{Theorem}[section]
\newtheorem{atheo}{Theorem*}
\newtheorem{prop}[theo]{Proposition}
\newtheorem{aprop}[atheo]{Proposition*}
\newtheorem{lemma}[theo]{Lemma}
\newtheorem{alemma}[atheo]{Lemma*}
\newtheorem{exam}[theo]{Example}
\newtheorem{coro}[theo]{Corollary}
\theoremstyle{definition}
\newtheorem{defi}[theo]{Definition}
\newtheorem{rem}[theo]{Remark}

%\renewcommand{\theequation}{\mbox{\arabic{section}.\arabic{equation}}}

%letters - added these
\newcommand{\Bb}{{\bf B}}
\newcommand{\Cb}{{\mathbb C}}
\newcommand{\Nb}{{\mathbb N}}
\newcommand{\Qb}{{\mathbb Q}}
\newcommand{\Rb}{{\mathbb R}}
\newcommand{\Zb}{{\mathbb Z}}
\newcommand{\Ac}{{\mathcal A}}
\newcommand{\Bc}{{\mathcal B}}
\newcommand{\Cc}{{\mathcal C}}
\newcommand{\Dc}{{\mathcal D}}
\newcommand{\Fc}{{\mathcal F}}
\newcommand{\Ic}{{\mathcal I}}
\newcommand{\Jc}{{\mathcal J}}
\newcommand{\Kc}{{\mathcal K}}
\newcommand{\Lc}{{\mathcal L}}
\newcommand{\Oc}{{\mathcal O}}
\newcommand{\Pc}{{\mathcal P}}
\newcommand{\Sc}{{\mathcal S}}
\newcommand{\Tc}{{\mathcal T}}
\newcommand{\Uc}{{\mathcal U}}
\newcommand{\Vc}{{\mathcal V}}

\author{Nik Weaver}

\title [Quantum graphs as quantum relations]
       {Quantum graphs as quantum relations}

\address {Department of Mathematics\\
Washington University\\
Saint Louis, MO 63130}

\email {nweaver@math.wustl.edu}

\date{\em June 14, 2017}

\thanks{Partially supported by NSF grant DMS-1067726.}

%%%%%%%%%%%%%%%%%%%%%%%%%%%%%%%%%%%%%%%%%%%%%%%%%%%%%%%%%%%%%%%%%%%%%%%
%	Please insert the article body now
%%%%%%%%%%%%%%%%%%%%%%%%%%%%%%%%%%%%%%%%%%%%%%%%%%%%%%%%%%%%%%%%%%%%%%%

\begin{abstract}
The ``noncommutative graphs'' which arise in quantum error correction are a
special case of the quantum relations introduced in \cite{W}. We use this
perspective to interpret the Knill-Laflamme error-correction conditions
\cite{KL} in terms of graph-theoretic independence, to give intrinsic
characterizations of Stahlke's noncommutative graph homomorphisms \cite{S}
and Duan, Severini, and Winter's noncommutative bipartite graphs \cite{DSW},
and to realize the noncommutative confusability graph associated to a
quantum channel \cite{DSW} as the pullback of a diagonal relation.

Our framework includes as special cases not only purely classical and purely
quantum information theory, but also the ``mixed'' setting which arises in
quantum systems obeying superselection rules. Thus we are able to define
noncommutative confusability graphs, give error correction conditions, and
so on, for such systems. This could have practical value, as superselection
constraints on information encoding can be physically realistic.
\end{abstract}

\maketitle

\section{Quantum graphs}\label{sec1}

``Quantum'' or ``noncommutative'' graphs appear in the setting of quantum error
correction \cite{DSW, SS, S}. The usual construction starts with a quantum
channel, which in the Schrodinger picture is modelled by a completely positive
trace preserving (CPTP) map $\Phi: M_m \to M_n$. Here $M_m$ is the set of
$m \times m$ complex matrices and a CPTP map is concretely realized as a
linear map of the form
$$\Phi: \rho \to \sum_i K_i \rho K_i^*$$
where $\rho \in M_m$ and the {\it Kraus matrices} $K_i$ are a finite family
of $n \times m$
matrices satisfying $\sum K_i^*K_i = I_m$ (the $m \times m$ identity matrix).

Positive matrices with unit trace represent mixed states, and
pure states appear as the special case of matrices of the form
$|\alpha\rangle\langle\alpha|$ for $|\alpha\rangle$ a unit vector in
${\mathbb C}^m$. Thus quantum channels transform mixed
states to mixed states, and in error correction problems one is interested
in determining which input states can be distinguished with certainty
after passing through the channel. The condition that the images of two
pure states $|\alpha\rangle$ and $|\beta\rangle$ after
transmission must be orthogonal can be expressed as
$$\langle\alpha|B|\beta\rangle = 0\qquad\mbox{ for all }
B \in \mathcal{V}_\Phi = {\rm span}\{K_i^*K_j\}$$
(see, e.g., \cite{DSW}).

The space $\mathcal{V}_\Phi \subseteq M_m$ is an {\it operator system} ---
a linear subspace of $M_m$ which is stable under the adjoint operation and
contains the identity matrix (since we have assumed that
$\sum K_i^*K_i = I_m$). In the analogous classical setting one would be
dealing with a finite set of possible input states and one could create a
graph by placing an edge between any pair of input states which might, after
transmission through a noisy channel, be received as the same output state.
This classical {\it confusability graph} is relevant to classical zero-error
communication in something like the way that the operator system
$\mathcal{V}_\Phi$ is relevant to zero-error communication in the quantum
setting. This led Duan, Severini, and Winter to term $\mathcal{V}_\Phi$ a
{\it noncommutative confusability graph} \cite{DSW}.

Going further, since any operator system can arise in the above manner
from a quantum channel, they suggested that operator systems generally
could be considered ``noncommutative graphs''. This daring proposal was
supported by the fact that they were able to define a ``quantum Lov\'{a}sz
$\vartheta$ function'' for any operator system, in analogy to the classical
Lov\'{a}sz $\vartheta$ function of a graph.

\section{Quantum relations on $M_m$}

At around the same time, the notion of a ``quantum relation''
was introduced in \cite{W}. This notion gives rise to natural
definitions of such things as ``quantum equivalence relations'' and
``quantum partial orders'', and it is also the basis of the ``quantum
metrics'' and ``quantum uniform structures'' which were studied in \cite{KW},
an earlier project from which the notion of a quantum relation was extracted.

The identification of operator systems as ``quantum graphs'' was also made
in \cite{W}, but not pursued further there.\footnote{The expression
``quantum graph'' unhappily conflicts with an earlier, unrelated use
of this term, and also with the ``noncommutative graph'' terminology
used in \cite{DSW}. But in a setting that also includes quantum
relations, quantum metrics, and so on, it is still the simple and
obvious choice.} However, it is worth investigating this connection,
as under the quantum relations point of view basic aspects of the
theory of zero-error quantum communication become conceptually transparent.

The core idea is that an {\it operator space} --- a linear subspace of
$M_m$ --- can be thought of as a quantum analog of a relation on a finite
set. (In infinite dimensions, this becomes a weak* closed operator space,
but I will stick to the finite-dimensional setting in this paper.) Classically,
a relation on a set $X$ is a subset $R$ of $X \times X$, and we write $xRy$ to
indicate that the pair $(x,y)$ belongs to the relation. The relation $R$ is
said to be
\begin{itemize}
\item {\it reflexive} if $xRx$, for all $x \in X$

\item {\it symmetric} if $xRy$ implies $yRx$, for all $x,y \in X$

\item {\it antisymmetric} if $xRy$ and $yRx$ imply $x = y$, for all
$x,y \in X$

\item {\it transitive} if $xRy$ and $yRz$ imply $xRz$, for all $x,y, z \in X$.
\end{itemize}
This can be expressed more algebraically by letting $\Delta$ be the
{\it diagonal} relation $\Delta = \{(x,x): x \in X\}$, letting
$R^t$ be the {\it transpose} relation $R^t = \{(y,x): (x,y) \in R\}$, and
letting $RR'$ be the {\it product} relation $RR' = \{(x,z): (x,y) \in R$
and $(y,z) \in R'$ for some $y \in X\}$. We can then say that $R$ is
\begin{itemize}
\item reflexive if $\Delta \subseteq R$

\item symmetric if $R = R^t$

\item antisymmetric if $R \cap R^t \subseteq \Delta$

\item transitive if $R^2 \subseteq R$.
\end{itemize}

The analogous definitions for an operator space
$\mathcal{V} \subseteq M_m$ characterize $\mathcal{V}$ as
\begin{itemize}
\item {\it reflexive} if $I_m \in \mathcal{V}$

\item {\it symmetric} if $\mathcal{V} = \mathcal{V}^*$

\item {\it antisymmetric} if $\mathcal{V} \cap \mathcal{V}^*
\subseteq \mathbb{C}\cdot I_m$

\item {\it transitive} if $\mathcal{V}^2 \subseteq \mathcal{V}$.
\end{itemize}
We define $\mathbb{C}\cdot I_m$ to be the {\it diagonal} quantum relation
on $M_m$, so that $\mathcal{V}$ is reflexive if and only if
$\mathbb{C}\cdot I_m \subseteq \mathcal{V}$, in closer analogy with the
classical case. In the above, $\mathcal{V}^* = \{A^*: A \in \mathcal{V}\}$
is the set of Hermitian adjoints of matrices in $\mathcal{V}$ and
$\mathcal{V}^2 = {\rm span}\{AB: A, B \in \mathcal{V}\}$ is a special
case of the product of two operator spaces.

Graphs appear in this framework by regarding a classical graph
as a set of vertices equipped with a reflexive, symmetric relation. The
elements of the relation represent edges, and symmetry expresses the
fact that edges are undirected. Reflexivity corresponds to the convention
that there is a loop at each vertex. This makes sense in the error
correction setting: if we place an edge between any two states which
might be confused, then it is natural to include an edge between any
state and itself. (More pointedly, it is unnatural, and creates unnecessary
complication, not to do this.) Of course, in other settings we may
not wish to impose this requirement, in which case we could drop
reflexivity and define a quantum graph to merely be a symmetric quantum
relation. This was the approach taken in \cite{S}. For the sake of
definiteness, I will use the term {\it quantum graph} to mean a reflexive,
symmetric quantum relation, i.e., an operator system, as in \cite{DSW} and
\cite{W}; however, the main results of this paper apply to quantum relations
generally, and hence also to noncommutative graphs in the broader sense of
\cite{S}.

\section{Restrictions, pushforwards, and pullbacks}

In the quantum relations setting there are natural notions of
restriction, pushforward, and pullback. Suppose we are given quantum
relations on $M_m$ and $M_n$, i.e., 
linear subspaces $\mathcal{V} \subseteq M_m$ and
$\mathcal{W} \subseteq M_n$. If $E$ is any projection in $M_m$, meaning
that $E = E^2 = E^*$, and $\Phi: M_m \to M_n$ is a CPTP map expressed as
$\Phi(\rho) = \sum_{i = 1}^d K_i\rho K_i^*$, then we define
\begin{itemize}
\item the {\it restriction} of $\mathcal{V}$ to $EM_mE$ to be
$E\mathcal{V}E = \{EAE: A \in \mathcal{V}\}$

\item the {\it pushforward} of $\mathcal{V}$ along $\Phi$ to be
\begin{eqnarray*}
\overrightarrow{\mathcal{V}} &=& \sum K_i\mathcal{V}K_j^*\cr
&=& {\rm span}\{K_iAK_j^*: A \in \mathcal{V},\,
1 \leq i,j \leq d\} \subseteq M_n
\end{eqnarray*}

\item the {\it pullback} of $\mathcal{W}$ along $\Phi$ to be
\begin{eqnarray*}
\overleftarrow{\mathcal{W}} &=& \sum K_i^*\mathcal{W}K_j\cr
&=& {\rm span}\{K_i^*BK_j: B \in \mathcal{W},\,
1\leq i,j \leq d\} \subseteq M_m.
\end{eqnarray*}
\end{itemize}
If ${\rm rank}(E) = r$ then $EM_mE$ can be identified with $M_r$, and
$E\mathcal{V}E$ with a linear subspace of $M_r$, so that the restriction of
$\mathcal{V}$ can be regarded as a quantum relation on a smaller space.

These definitions are simple and concrete. It is easy to check that if
$\mathcal{V}$ and $\mathcal{W}$ are quantum graphs (i.e., operator systems)
then so are $E\mathcal{V}E \subseteq M_r$, $\overrightarrow{\mathcal{V}}
\subseteq M_n$, and $\overleftarrow{\mathcal{W}} \subseteq M_m$.
However, the Kraus matrices $K_i$ are not uniquely determined by the map
$\Phi$ and it is not immediately apparent that the definitions of
$\overrightarrow{\mathcal{V}}$ and $\overleftarrow{\mathcal{W}}$
are independent of this choice. The definitions are also rather unmotivated.
For instance, when $\mathcal{V}$ is a quantum graph its restriction is to be
thought of as analogous to the induced subgraph construction in classical
graph theory. But an induced subgraph is obtained by choosing a subset of
the vertex set and throwing out all edges which extend out of this subset,
whereas our definition of restriction involves compressing everything in
$\mathcal{V}$ to the range of $E$. So the validity of the analogy is
unclear.

These concerns will be addressed in the next section, when we discuss
how the quantum relations point of view leads to the definitions given
above. But first, let us explain how these operations relate
to error correction.

Consider first the classical setting in which a channel is modelled by a
probabilistic transition from an initial set of states $S$ to a final set of
states $T$. That is, each initial state has some (possibly zero) probability
of going to each of the final states. Such a transition is represented by a
stochastic matrix. The confusability graph is specified by placing an
edge between two initial states if there exists a final state
to which they each have a nonzero probability of transitioning.

As I mentioned earlier, since each initial state can certainly end up at the
same state as itself, it is natural to include a loop at each vertex in this
graph. A {\it code} in this classical setting is then an independent subset
of $S$, i.e., a set of vertices with the property that the induced
subgraph contains only loops, with no edges between distinct vertices. In
the terminology of Section 2, the induced subgraph is diagonal. The
quantum analog of this would be a projection $E$ with the property that
the induced quantum subgraph $E\mathcal{V}E$ is diagonal, i.e.,
$E\mathcal{V}E = \mathbb{C}\cdot E$. If $\mathcal{V}_\Phi$ is the quantum
confusability graph mentioned in Section 1, this statement exactly expresses
the Knill-Laflamme error correction conditions \cite{KL}. So
\bigskip

{\narrower\noindent
\it the statement that the range of $E$ is a quantum code is equivalent to
the statement that $E$ induces a diagonal quantum subgraph, just as in the
classical case a code is a subset of the confusability graph for which the
induced subgraph is diagonal.
\par}
\bigskip

A more sophisticated way to specify the classical
confusability graph for a probabilistic transition from $S$ to $T$ is
to say that it is the pullback of the diagonal relation on $T$. Here we
define the pullback to $S$ of a graph on $T$ by putting an edge between two
elements of $S$ if they have a nonzero probability of mapping to adjacent
elements of $T$. The quantum analog of this construction would be
the pullback along a CPTP map $\Phi: M_m \to M_n$ of the diagonal
quantum relation on $M_n$. According to the definition of quantum
pullback given above, this would be
$${\rm span}\{K_i^*K_j: 1\leq i,j \leq d\} \subseteq M_m,$$
which is exactly the quantum confusability graph $\mathcal{V}_\Phi$.
That is,
\bigskip

{\narrower
\noindent \it the quantum confusability graph $\mathcal{V}_\Phi$ associated
to a CPTP map $\Phi: M_m \to M_n$ is the pullback along $\Phi$ of the diagonal
quantum relation on $M_n$, just as the clasical confusability graph
associated to a classical channel is the pullback of the diagonal relation.
\par}
\bigskip

The passage of a message through sequential channels provides a simple
illustration of the
value of the pullback construction. Suppose we are given classical channels
from $S$ to $T$ and from $T$ to $U$. Then their composition defines a channel
from $S$ to $U$, and the confusability graph of this composition is
the pullback to $S$ of the confusability graph for the $T$-to-$U$ channel.
In other words, it is the pullback of the pullback of the diagonal relation on
$U$. The same statement can be made in the quantum setting, as one can see
by a short computation.

A similar construction is the pushforward of the diagonal relation on $S$.
This ``dual'' confusability graph classically
includes an edge between two states in $T$ if they might have originated
in the same state of $S$. It might be used by the recipient of a signal
which was sent through a noisy channel without the aid of a code, as a
way to keep track of possible ambiguity. (This could also be a model of
a noisy measurement process in which nature is the ``sender''.) The
quantum analog would simply be the
quantum pushforward of the diagonal quantum relation.

Pushforwards and pullbacks give rise to notions of ``morphism''. Namely,
we may regard $\Phi$ as a morphism from $\mathcal{V}$ to $\mathcal{W}$
if $\overrightarrow{\mathcal{V}} \subseteq \mathcal{W}$, or, alternatively,
if $\mathcal{V} \subseteq \overleftarrow{\mathcal{W}}$. These two conditions
are not equivalent, even in the classical case: the classical analog of the
first says that any possible targets of two adjacent vertices in $S$ must
be adjacent in $T$, while the second says that any two adjacent vertices
in $S$ must have some possible targets which are adjacent in $T$. The
quantum version of the first, stronger, condition is identical to
Stahlke's notion of ``noncommutative graph homomorphism'' described in
\cite{S}:
\bigskip

{\narrower
\noindent \it a CPTP map $\Phi: M_m \to M_n$ is a ``noncommutative graph
homomorphism'' \cite{S} between operator systems $\mathcal{V} \subseteq M_m$
and $\mathcal{W} \subseteq M_n$ if $\overrightarrow{\mathcal{V}} \subseteq
\mathcal{W}$.
\par}

\section{Connecting states}

Now let us see why the definitions of restrictions, pushforwards, and
pullbacks given above are natural. The idea is to think of elements of
an operator space as ``connecting'' states. We could say that two pure states
$|\alpha\rangle, |\beta\rangle \in \mathbb{C}^m$ are connected
by a quantum relation $\mathcal{V} \subseteq M_m$ if
$\langle\alpha|B|\beta\rangle \neq 0$ for some
$B \in \mathcal{V}$. However, quantum relations are not
determined by this kind of information. For example, take $\mathcal{V}_1$
to be the set of $2\times 2$ matrices of the form $\left[
\begin{matrix}a&b\cr c&a\end{matrix}
\right]$
with $a, b, c \in \mathbb{C}$ and take $\mathcal{V}_2$ to be the full
$2\times 2$ matrix algebra $M_2$. These are both quantum graphs on $M_2$,
i.e., operator systems. It is routine to check that
$|\alpha\rangle,|\beta\rangle \in \mathbb{C}^2$ are connected by
$\mathcal{V}_1$ if and only if
neither of them is the zero vector if and only if they are connected by
$\mathcal{V}_2$. Thus, $\mathcal{V}_1$ and $\mathcal{V}_2$ are distinct
quantum relations which connect the same pairs of states in $\mathbb{C}^2$.

We must instead consider states not in $\mathbb{C}^m$ but in
$\mathbb{C}^m\otimes\mathbb{C}^k \cong \mathbb{C}^{mk}$. That is,
we consider states of a composite system
formed from the original system and some other system. Then we can define
$|\alpha\rangle,|\beta\rangle \in \mathbb{C}^{mk}$ to be connected by
$\mathcal{V}$ if there exists $B \in \mathcal{V}$ such that
$$\langle \alpha|(B\otimes I_k)|\beta\rangle \neq 0.$$
It is not hard to show that $\mathcal{V}$ is indeed determined by which
pairs of states it connects in $\mathbb{C}^{mk}$ for arbitrary $k$;
indeed, $k = m$ suffices. See Lemma \ref{keylemma} below.

It is convenient to also consider mixed states. First of all, observe that
$$|\alpha\rangle\langle \alpha|(B\otimes I_k)|\beta\rangle\langle\beta|$$
is nonzero if and only if the scalar factor
$\langle \alpha|(B\otimes I_k)|\beta\rangle$ is nonzero. So we can also say
that $\mathcal{V}$ connects $|\alpha\rangle$ and $|\beta\rangle$ if and only
if the preceding expresion is nonzero for some $B \in \mathcal{V}$. More
generally, say that $\mathcal{V}$ connects positive matrices
$A, C \in M_{mk} \cong M_m\otimes M_k$ with unit trace if
$$A(B\otimes I_k)C \neq 0$$
for some $B \in \mathcal{V}$.

Since $\mathcal{V}$ is already determined by the pairs of (composite) pure
states that it connects, it is certainly determined by the pairs of mixed
states that it connects. Indeed, any positive matrix can be expressed as a
sum of positive rank one matrices, so once we know which pure states are
connected by $\mathcal{V}$, we also know which mixed state are connected.

This point of view makes the constructions described in
the last section transparent. Let $k$ be a natural number, let
$\mathcal{V} \subseteq M_m$ and
$\mathcal{W} \subseteq M_n$ be quantum relations, let $E \in M_m$ be a
rank $r$ projection, and let $\Phi: M_m \to M_n$ be a CPTP map. Then
\begin{itemize}
\item $E\mathcal{V}E$ connects mixed states
$A, C \in EM_mE\otimes M_k \cong M_r\otimes M_k$ if and only if
$\mathcal{V}$ connects them

\item $\overrightarrow{\mathcal{V}}$ connects mixed states
$A, C \in M_n\otimes M_k$ if and only if $\mathcal{V}$ connects
$\Phi^*(A)$ and $\Phi^*(C)$

\item $\overleftarrow{\mathcal{W}}$ connects mixed states
$A, C \in M_m \otimes M_k$ if and only if $\mathcal{W}$ connects
$\Phi(A)$ and $\Phi(C)$.
\end{itemize}
See Proposition \ref{restrictchar}, Theorem \ref{concretepb}, and
Theorem \ref{concretepf} below.

Informally, the mixed states that $E\mathcal{V}E$ connects are just
the mixed states that live on $E$ and are connected by $\mathcal{V}$.
This jibes better with the ``induced subgraph'' intuition: in order to
restrict $\mathcal{V}$ to $E$, look at the pairs of states that are
connected by $\mathcal{V}$, and throw out any of them which do not lie
under $E$.

Pushforward and pullback are also easily understood in terms of connection.
The states connected by $\overrightarrow{\mathcal{V}}$ are just the states
whose images under $\Phi^*$ are connected by $\mathcal{V}$, and the states
connected by $\overleftarrow{\mathcal{W}}$ are just the states whose images
under $\Phi$ are connected by $\mathcal{W}$. This characterization shows
that the definitions of pushforward and pullback only depend on the map
$\Phi$, not the choice of Kraus matrices.

The whole point of the noncommutative confusability graph is that it
connects mixed states $A$ and $C$ if and only if $\Phi(A)\Phi(C) \neq 0$,
i.e., their image states could be confused. That is the same as saying
that their image states are connected by the diagonal quantum relation.

We can also use the idea of connecting mixed states to give an intrinsic
characterization of the ``noncommutative (directed) bipartite graphs'' of
Duan, Severini, and Winter \cite{DSW}. Given a CPTP map $\Phi: M_m \to M_n$
with Kraus matrices $K_i$, they defined this to be the span of the matrices
$K_i$. This span is no longer an operator system in general, but it is
still an operator space and hence still counts, in our terminology, as a
quantum relation. Its intrinsic characterization is simple: if
$\mathcal{V} = {\rm span}\{K_i\} \subseteq M_{n,m}$, then for any mixed
states $A \in M_m\otimes M_k$ and $C \in M_n\otimes M_k$, we have
$C(B\otimes I_k)A \neq 0$ for some $B \in \mathcal{V}$ if and only if
$\Phi(A)C \neq 0$ (letting $\Phi$ act entrywise on matrices in
$M_m\otimes M_k \cong M_k(M_m)$). That is,
\bigskip

{\narrower
\noindent \it the noncommutative bipartite graph associated to a CPTP map
$\Phi: M_m \to M_n$ connects mixed states $A \in M_m\otimes M_k$ and
$C \in M_n \otimes M_k$ if and only if $\Phi(A)C \neq 0$, i.e., there is
a possibility of confusing the image of $A$ with $C$.
\par}
\bigskip

\noindent One direction is trivial: if $C(B\otimes I_k)A = 0$
for all $B \in \mathcal{V}$, then in particular $C(K_i\otimes I_k)A = 0$
for all $i$; multiplying on the right by $(K_i^*\otimes I_k)$ and summing
over $i$ then yields $C\Phi(A) = 0$. This is equivalent to $\Phi(A)C = 0$
since $\Phi(A)$ and $C$ are positive. The reverse direction follows from
Lemma \ref{poslem} (cf.\ the proof of Theorem \ref{concretepb}).

\section{General quantum relations}

The definition of ``quantum relations'' given in \cite{W} was more general
than the one described above and actually encompasses both the classical
and quantum cases. By placing the notions of channel, confusability graph,
code, etc., in this context we obtain a common generalization in which the
classical and quantum cases are not merely analogous, but literally special
cases of a single theory. This material will be presented more formally, with
proofs of most results.

Let $\mathcal{M}$ be a unital $*$-subalgebra of $M_m$.
(In \cite{W} it could be an arbitrary von Neumann algebra in infinite
dimensions.) The two most important cases to keep in mind are
$\mathcal{M} = M_m$, the full matrix algebra, and $\mathcal{M} = D_m$,
the subalgebra of diagonal matrices. However, other cases could arise in
the presence of superselection rules; see the appendix.

Let
$$\mathcal{M}' = \{B \in M_m: AB = BA\mbox{ for all }A \in \mathcal{M}\}$$
be the commutant of $\mathcal{M}$. The commutant of $M_m$ is the scalar
algebra $\mathbb{C}\cdot I_m$, and the commutant of $D_m$ is itself. Von
Neumann's double commutant theorem states that $\mathcal{M}'' = \mathcal{M}$
always holds.

\begin{defi}\label{basicdef}
(\cite{W}, Definition 2.1)
A {\it quantum relation} on $\mathcal{M}$ is an $\mathcal{M}'$-$\mathcal{M}'$
bimodule, i.e., a linear subspace $\mathcal{V} \subseteq M_m$ satisfying
$\mathcal{M}'\mathcal{V}\mathcal{M}' \subseteq \mathcal{V}$.
\end{defi}

Here we use the operator space product $\mathcal{V}\mathcal{W} =
{\rm span}\{AB: A \in \mathcal{V}, B \in \mathcal{W}\}$.

\begin{defi}\label{def2}
(\cite{W}, Definition 2.4)
$\mathcal{M}'$ is the {\it diagonal} quantum relation on $\mathcal{M}$.
A quantum relation $\mathcal{V}$ is
\begin{itemize}
\item {\it reflexive} if $\mathcal{M}' \subseteq \mathcal{V}$

\item {\it symmetric} if $\mathcal{V}^* = \mathcal{V}$

\item {\it antisymmetric} if $\mathcal{V} \cap \mathcal{V}^* \subseteq
\mathcal{M}'$

\item {\it transitive} if $\mathcal{V}^2 \subseteq \mathcal{V}$.
\end{itemize}
\end{defi}

If $\mathcal{M} = M_m$ then $\mathcal{M}'$ is just the set of scalar matrices,
$\mathcal{M}' = \mathbb{C}\cdot I_m$,
and so any linear subspace of $M_m$ counts as a quantum relation on $M_m$
according to Definition \ref{basicdef}.
At the other extreme, it is not hard to check that quantum relations on
$D_m$ have a very transparent form. Let $E_{ij}$ be the $m\times m$ matrix
with a $1$ in the $(i,j)$ entry and $0$'s elsewhere.

\begin{prop}\label{prop1}
(\cite{W}, Proposition 2.2) If $R$ is any subset of
$\{(i,j): 1 \leq i,j \leq m\}$ then
$$\mathcal{V}_R = {\rm span}\{E_{ij}: (i,j) \in R\} \subseteq M_m$$
is a quantum relation on $D_m$, i.e., a $D_m$-$D_m$ bimodule, and every
quantum relation on $D_m$ equals $\mathcal{V}_R$ for some $R$. This
establishes a 1-1 correspondence between the classical relations on the
set $\{1, \ldots, m\}$ and the quantum relations on $D_m$.
\end{prop}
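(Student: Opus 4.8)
The statement has two parts: first, that each $\mathcal{V}_R$ is a $D_m$-$D_m$ bimodule; second, that every $D_m$-$D_m$ bimodule arises this way, giving a bijection with subsets $R$. The first part is a direct computation. Since $D_m$ is spanned by the matrix units $E_{ii}$, to check $D_m \mathcal{V}_R D_m \subseteq \mathcal{V}_R$ it suffices to verify $E_{kk} E_{ij} E_{\ell\ell} \subseteq \mathcal{V}_R$ for $(i,j) \in R$. But $E_{kk} E_{ij} E_{\ell\ell} = \delta_{ki}\delta_{j\ell} E_{ij}$, which is either $0$ or $E_{ij}$ with $(i,j)\in R$; either way it lies in $\mathcal{V}_R$. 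This extends bilinearly, so $\mathcal{V}_R$ is a quantum relation on $D_m$.

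For the converse, let $\mathcal{V} \subseteq M_m$ be an arbitrary $D_m$-$D_m$ bimodule. The key is that the projections $E_{kk} \in D_m$ act as coordinate projections: for any matrix $A = \sum_{i,j} a_{ij} E_{ij}$, we have $E_{kk} A E_{\ell\ell} = a_{k\ell} E_{k\ell}$. So define $R = \{(i,j) : a_{ij} \neq 0 \text{ for some } A = (a_{ij}) \in \mathcal{V}\}$. I claim $\mathcal{V} = \mathcal{V}_R$. For the inclusion $\mathcal{V}_R \subseteq \mathcal{V}$: if $(i,j) \in R$, pick $A \in \mathcal{V}$ with $a_{ij} \neq 0$; then $E_{ii} A E_{jj} = a_{ij} E_{ij} \in \mathcal{V}$ by the bimodule property, hence $E_{ij} \in \mathcal{V}$, so $\mathcal{V}_R \subseteq \mathcal{V}$. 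For the reverse inclusion $\mathcal{V} \subseteq \mathcal{V}_R$: any $A \in \mathcal{V}$ can be written $A = \sum_{i,j} E_{ii} A E_{jj}$ (since $\sum_i E_{ii} = I_m$), and each nonzero term is $a_{ij} E_{ij}$ with $(i,j) \in R$, so $A \in \mathcal{V}_R$. Thus $\mathcal{V} = \mathcal{V}_R$.

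Finally, the correspondence $R \mapsto \mathcal{V}_R$ is injective because the matrix units $\{E_{ij}\}$ are linearly independent, so $\mathcal{V}_R$ determines $R$ as exactly the set of $(i,j)$ with $E_{ij} \in \mathcal{V}_R$; combined with surjectivity (just established) this gives the bijection. Identifying subsets $R \subseteq \{1,\dots,m\}^2$ with classical relations on $\{1,\dots,m\}$ completes the statement.

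There is really no hard part here — the proof is essentially the observation that conjugation by the diagonal matrix units extracts individual matrix entries, so a $D_m$-bimodule is forced to be "coordinate-wise," i.e., spanned by a subset of the matrix units. The only point requiring any care is making sure the bimodule condition is checked against a \emph{spanning} set of $D_m$ (the $E_{kk}$) rather than all of $D_m$, which is legitimate by bilinearity of the product.
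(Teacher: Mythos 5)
Your proof is correct and is the standard argument (the paper itself omits the proof, deferring to \cite{W}, but the intended argument is exactly this: compressing by the diagonal matrix units $E_{ii}$, $E_{jj}$ extracts individual entries, forcing any $D_m$-$D_m$ bimodule to be a span of matrix units). Nothing is missing; the reduction of the bimodule condition to the spanning set $\{E_{kk}\}$ and the decomposition $A = \sum_{i,j} E_{ii}AE_{jj}$ are both handled properly.
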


This simple result explains the justification for the term ``quantum
relation'' and also shows the value of letting $\mathcal{M}$ be any unital
$*$-subalgebra of $M_m$, not just $M_m$ itself. By taking this step we produce
a common generalization of both the classical ($\mathcal{M} = D_m$) and
elementary quantum ($\mathcal{M} = M_m$) cases. The following result
shows that the terminology of Definition \ref{def2} legitimately generalizes
the classical case.

\begin{prop}\label{prop2}
(\cite{W}, Proposition 2.5)
In the notation of Proposition \ref{prop1}, the diagonal quantum relation
on $D_m$ is $\mathcal{V}_\Delta$ where $\Delta = \{(i,i): 1 \leq i \leq m\}$.
A classical relation $R$ on $\{1, \ldots, m\}$ is
reflexive, symmetric, antisymmetric, or transitive in the ordinary  sense
if and only if the quantum relation $\mathcal{V}_R$ has the same property
in the sense of Definition \ref{def2}.
\end{prop}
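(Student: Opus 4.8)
The plan is to verify the four equivalences one at a time, using the explicit description $\mathcal{V}_R = \mathrm{span}\{E_{ij} : (i,j) \in R\}$ from Proposition \ref{prop1} together with the elementary multiplication rule $E_{ij}E_{k\ell} = \delta_{jk}E_{i\ell}$ and the adjoint rule $E_{ij}^* = E_{ji}$. The point is that each algebraic condition on operator spaces in Definition \ref{def2} translates, under the correspondence $R \leftrightarrow \mathcal{V}_R$, directly into the corresponding set-theoretic condition on $R$, because the matrix units $E_{ij}$ are linearly independent and multiply in a ``combinatorial'' way that mirrors composition of pairs.

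First I would record the translation dictionary. Since the $E_{ij}$ are linearly independent, $\mathcal{V}_R \subseteq \mathcal{V}_{R'}$ holds if and only if $R \subseteq R'$; in particular $\mathcal{V}_R = \mathcal{V}_{R'}$ iff $R = R'$. Next, $\mathcal{V}_R^* = \mathrm{span}\{E_{ij}^* : (i,j)\in R\} = \mathrm{span}\{E_{ji} : (i,j)\in R\} = \mathcal{V}_{R^t}$. For products, $\mathcal{V}_R\mathcal{V}_{R'} = \mathrm{span}\{E_{ij}E_{k\ell} : (i,j)\in R,\ (k,\ell)\in R'\} = \mathrm{span}\{E_{i\ell} : (i,j)\in R,\ (j,\ell)\in R' \text{ for some } j\} = \mathcal{V}_{RR'}$; in particular $\mathcal{V}_R^2 = \mathcal{V}_{R^2}$. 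Finally, $\mathcal{V}_R \cap \mathcal{V}_{R'} = \mathcal{V}_{R\cap R'}$, again because distinct matrix units are linearly independent, so a matrix lies in both spans iff its support (as a combination of matrix units) lies in $R\cap R'$. I should also note that the diagonal quantum relation on $D_m$ is $D_m' = D_m = \mathrm{span}\{E_{ii}\} = \mathcal{V}_\Delta$, which is the first assertion of the proposition.

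With the dictionary in hand, each of the four equivalences is immediate. Reflexivity: $\mathcal{V}_\Delta \subseteq \mathcal{V}_R$ iff $\Delta \subseteq R$, i.e.\ $(i,i)\in R$ for all $i$, which is reflexivity of $R$. Symmetry: $\mathcal{V}_R = \mathcal{V}_R^* = \mathcal{V}_{R^t}$ iff $R = R^t$, i.e.\ $R$ is symmetric. Antisymmetry: $\mathcal{V}_R \cap \mathcal{V}_R^* \subseteq \mathcal{V}_\Delta$ becomes $\mathcal{V}_{R\cap R^t} \subseteq \mathcal{V}_\Delta$, i.e.\ $R \cap R^t \subseteq \Delta$, which says precisely that $(i,j),(j,i)\in R$ forces $i=j$. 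Transitivity: $\mathcal{V}_R^2 \subseteq \mathcal{V}_R$ becomes $\mathcal{V}_{R^2} \subseteq \mathcal{V}_R$, i.e.\ $R^2 \subseteq R$, which is transitivity of $R$.

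The only place demanding any care — and the closest thing to a genuine obstacle — is justifying that intersection of bimodule spans corresponds to intersection of index sets, and more generally making the linear-independence argument airtight: one must be sure that, e.g., $\mathcal{V}_R \cap \mathcal{V}_R^*$ is spanned exactly by $\{E_{ij} : (i,j) \in R \cap R^t\}$ and contains no ``accidental'' elements arising from linear combinations. This follows at once from the fact that $\{E_{ij}\}_{1\le i,j\le m}$ is a basis of $M_m$, so every subspace of the form $\mathcal{V}_S$ equals its own projection onto the coordinates indexed by $S$, and the coordinatewise description makes all the set operations match up. Once this is observed the proof is a direct unwinding of definitions, with no estimates or case analysis required.
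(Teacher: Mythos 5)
Your proof is correct, and the paper itself omits the argument entirely (``The proof is easy''), so your direct verification via the dictionary $\mathcal{V}_R^* = \mathcal{V}_{R^t}$, $\mathcal{V}_R\mathcal{V}_{R'} = \mathcal{V}_{RR'}$, $\mathcal{V}_R \cap \mathcal{V}_{R'} = \mathcal{V}_{R\cap R'}$, and $D_m' = D_m = \mathcal{V}_\Delta$ is exactly the intended routine computation. No gaps: the linear independence of the matrix units justifies every step, including the intersection identity you rightly flag as the only point needing care.
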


The proof is easy.

Earlier we interpreted classical graphs as sets equipped with reflexive,
symmetric relations, and defined quantum graphs to be operator systems.
Both notions are subsumed in the following definition.

\begin{defi}\label{def3}
(\cite{W}, Definition 2.6 (d))
A {\it quantum graph on $\mathcal{M}$} is a reflexive, symmetric quantum
relation on $\mathcal{M}$.
\end{defi}

In the case $\mathcal{M} = M_m$, this would just mean an operator system
in $M_m$; in the case $\mathcal{M} = D_m$, by Propositions \ref{prop1}
and \ref{prop2} it effectively becomes a classical reflexive, symmetric
relation on a set.

\section{Intrinsic characterization}

The definition of a quantum relation on a unital $*$-subalgebra $\mathcal{M}
\subseteq M_m$ given in Definition \ref{basicdef} appears to depend
on the representation of $\mathcal{M}$, i.e., on the value of $m$ and perhaps
also on the way $\mathcal{M}$, regarded as an abstract algebraic structure,
is situated in $M_m$. However, this definition is
in fact effectively representation-independent, in the following sense.
Say that $*$-algebras $\mathcal{M}$ and $\mathcal{N}$ are {\it $*$-isomorphic}
if there is a linear bijection between them that is compatible with the
product and adjoint operations.

\begin{prop}\label{repindep}
(\cite{W}, Theorem 2.7)
Let $\mathcal{M} \subseteq M_m$ and $\mathcal{N} \subseteq M_n$ be unital
$*$-subalgebras and suppose they are $*$-isomorphic. Then there is a natural
1-1 correspondence between the quantum relations on $\mathcal{M}$ and the
quantum relations on $\mathcal{N}$. This correspondence takes the diagonal
quantum relation on $\mathcal{M}$ to the diagonal quantum relation on
$\mathcal{N}$ and is compatible with
the operator space product and adjoint operations on quantum relations.
\end{prop}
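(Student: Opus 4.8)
The plan is to establish the correspondence by first reducing to the case where $\mathcal{M}$ and $\mathcal{N}$ are presented in a standard form, then exhibiting an explicit bijection and checking it has the required properties. Every finite-dimensional $*$-subalgebra of $M_m$ is $*$-isomorphic to a direct sum $\bigoplus_k M_{d_k}$, and its concrete embedding in $M_m$ is determined up to unitary conjugation by the multiplicities with which each block acts. So first I would reduce to the situation where $\mathcal{M} \subseteq M_m$ and $\mathcal{N} \subseteq M_n$ are the \emph{same} abstract algebra $\mathcal{A} = \bigoplus_k M_{d_k}$ but embedded with different multiplicity vectors; the general case follows by composing with a fixed abstract $*$-isomorphism $\mathcal{M} \to \mathcal{N}$. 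In this standard form, $\mathcal{M} = \bigoplus_k M_{d_k} \otimes I_{e_k}$ acting on $\bigoplus_k \mathbb{C}^{d_k} \otimes \mathbb{C}^{e_k}$, and the commutant is $\mathcal{M}' = \bigoplus_k I_{d_k} \otimes M_{e_k}$.

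The heart of the matter is the case of a single block with varying multiplicity: $\mathcal{M} = M_d \otimes I_e \subseteq M_{de}$ versus $\mathcal{M} = M_d \otimes I_{e'} \subseteq M_{de'}$. Here $M_{de} \cong M_d \otimes M_e$ as a $*$-algebra, $\mathcal{M}' = I_d \otimes M_e$, and an $\mathcal{M}'$-$\mathcal{M}'$ bimodule $\mathcal{V} \subseteq M_d \otimes M_e$ decomposes: writing an element as a $d\times d$ block matrix over $M_e$, the bimodule condition $(I_d \otimes M_e)\mathcal{V}(I_d\otimes M_e) \subseteq \mathcal{V}$ forces each ``block slot'' $(i,j)$ of $\mathcal{V}$ to be either $0$ or all of $M_e$, independently of $e$. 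Thus $\mathcal{V}$ is determined by a subset $R \subseteq \{1,\dots,d\}^2$ via $\mathcal{V} = \mathcal{V}_R^{(d)} \otimes M_e$ where $\mathcal{V}_R^{(d)} = \operatorname{span}\{E_{ij} : (i,j)\in R\} \subseteq M_d$ (this is the ``superselection'' generalization of Proposition \ref{prop1}). The correspondence sends $\mathcal{V}_R^{(d)} \otimes M_e$ to $\mathcal{V}_R^{(d)} \otimes M_{e'}$. For the general multiblock case one runs this argument block-by-block — but one must also handle the off-diagonal blocks connecting different summands $M_{d_k}$ and $M_{d_\ell}$: there the relevant commutant piece is $M_{e_k, e_\ell}$ (all $e_k \times e_\ell$ matrices) acting by left/right multiplication, and the same dichotomy holds, so each inter-block Hom-space slot in $\mathcal{V}$ is either $0$ or everything. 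Hence a quantum relation on $\mathcal{M}$ is encoded by a subset of the ``bipartite index set'' $\bigsqcup_{k,\ell} \{1,\dots,d_k\}\times\{1,\dots,d_\ell\}$, an encoding manifestly independent of the multiplicities, and this is the claimed bijection.

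Finally I would verify the three compatibility claims. That the diagonal relation $\mathcal{M}'$ corresponds to the diagonal relation $\mathcal{N}'$ is immediate once one checks that in the standard form $\mathcal{M}' = \bigoplus_k I_{d_k}\otimes M_{e_k}$ is exactly $\mathcal{V}_R$ for $R$ the union of the diagonals of the blocks — i.e. the ``abstract data'' of $\mathcal{M}'$ is the same regardless of multiplicities. Compatibility with the adjoint is clear since passing to adjoints replaces $R$ by its transpose $R^t$ at the level of index sets, uniformly. Compatibility with the operator space product is the one genuinely computational point: one checks that the product $\mathcal{V}_R \cdot \mathcal{V}_S$ of two standard-form bimodules corresponds to the composition $S \circ R$ of index relations (with the convention that composing through block $k$ is possible exactly when multiplicities allow a nonzero composite, which they always do since the middle tensor factor $M_{e_k}$ is a full matrix algebra), and this composite is again multiplicity-independent. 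The main obstacle I anticipate is not any single step but the bookkeeping in the multiblock case — keeping the inter-block Hom-spaces and their left/right module structures straight — together with justifying the reduction to standard form, which relies on the (standard but worth stating) fact that two $*$-representations of a finite-dimensional $*$-algebra with the same multiplicities are unitarily equivalent, so that the correspondence is ``natural'' in the sense claimed rather than depending on arbitrary choices.
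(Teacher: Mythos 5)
There is a genuine error at the heart of your single-block analysis. For $\mathcal{M} = M_d \otimes I_e$ with $\mathcal{M}' = I_d \otimes M_e$, you claim that the bimodule condition forces each block slot of $\mathcal{V}$ to be $0$ or all of $M_e$, and that \emph{therefore} $\mathcal{V}$ is determined by a subset $R \subseteq \{1,\dots,d\}^2$. The first half is true (the set of $(i,j)$-entries of elements of $\mathcal{V}$ is a subspace of $M_e$ invariant under $X \mapsto BXB'$, hence $0$ or $M_e$), but the conclusion does not follow: the multipliers $I_d\otimes B$ and $I_d\otimes B'$ act on all $d^2$ slots simultaneously with the \emph{same} $B$ and $B'$, so the slots cannot be varied independently and knowing which entry-sets are nonzero does not recover $\mathcal{V}$. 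Concretely, $\mathbb{C}\cdot(E_{11}+E_{12})\otimes M_e$ is an $(I_d\otimes M_e)$-bimodule whose $(1,1)$- and $(1,2)$-entry sets are both all of $M_e$, yet it is strictly smaller than $\mathcal{V}_{\{(1,1),(1,2)\}}\otimes M_e$. Worse, taking $e=1$ your classification would say that every quantum relation on $M_d$ is a span of matrix units $\mathcal{V}_R$; but every linear subspace of $M_d$ is a quantum relation on $M_d$ (its commutant is the scalars), and, e.g., the operator system $\mathcal{V}_1$ of Section 4 of the paper is not of this form. Your encoding collapses the genuinely quantum relations to classical ones --- Proposition \ref{prop1} establishes the $\mathcal{V}_R$ classification only for $\mathcal{M}=D_m$.

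The correct statement, and the one the paper's own sketch rests on, is that the $(I_d\otimes M_e)$-bimodules in $M_d\otimes M_e$ are exactly the sets $\mathcal{V}_0\otimes M_e$ with $\mathcal{V}_0 \subseteq M_d$ an \emph{arbitrary} linear subspace (and, for the corners joining distinct blocks in the multiblock case, $\mathcal{V}_{k\ell}\otimes M_{e_k,e_\ell}$ with $\mathcal{V}_{k\ell}\subseteq M_{d_k,d_\ell}$ arbitrary). With that replacement your overall architecture --- reduce to standard form by unitary conjugation, observe that the multiplicity-free data $(\mathcal{V}_{k\ell})$ is independent of the multiplicities $e_k$, then check the diagonal, adjoint, and product compatibilities --- does track the paper's intended argument, which is amplification $\mathcal{V}\mapsto\mathcal{V}\otimes M_k$ combined with spatial isomorphisms. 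But as written, the classification step is false, and the product computation in your final paragraph (composition of index relations) is built on it and fails with it.
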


To see how the correspondence works, consider the case where $n = mk$ and
$\mathcal{N} = \mathcal{M}\otimes I_k \subseteq M_m\otimes M_k \cong M_n$.
Then $\mathcal{N}' = \mathcal{M}'\otimes M_k$ and, identifying
$M_n$ with $M_m\otimes M_k$, the bimodules over $\mathcal{N}'$ in $M_n$
are precisely the sets of the form $\mathcal{V}\otimes M_k$ for $\mathcal{V}$
a bimodule over $\mathcal{M}'$ in $M_m$. The full result of Proposition
\ref{repindep} is not much harder than this special case because arbitrary
$*$-isomorphisms between von Neumann algebras are not much more
general than this.

Thus, the notion of a quantum relation on $\mathcal{M}$ is effectively
independent of the representation of $\mathcal{M}$. We therefore expect
that there should be an ``intrinsic'' characterization of them which does
not reference the ambient matrix algebra. This can be achieved using the
idea of connecting states introduced in Section 4.

At that point it was convenient to consider mixed states, since we wanted
to push forward and pull back along a CPTP map, which can convert pure
states to mixed states. For the purpose of abstract characterization, it
is better to generalize pure states, which can be identified with rank
one projections, to projections of arbitrary rank. A direct connection
between the two approaches can be made by observing that for any
positive matrices $A, C \in M_m\otimes M_k$, we have
$A(B\otimes I_k)C \neq 0$ if and only if $[A](B \otimes I_k)[C] \neq 0$,
where $[A]$ denotes the range projection of $A$, i.e., the orthogonal
projection onto the range of $A$.

\begin{prop}\label{keylemma}
(\cite{W}, Lemma 2.8)
Let $\mathcal{V}$ be a proper subspace of $M_m$ and let
$B \in M_m\setminus \mathcal{V}$.

\noindent (a) There exists a natural number $k$ and vectors
$|\alpha\rangle, |\beta\rangle \in \mathbb{C}^m\otimes\mathbb{C}^k$ such that
$$\langle\alpha|(A\otimes I_k)|\beta\rangle = 0\qquad\mbox{ for all }
A \in \mathcal{V}$$
but
$$\langle\alpha|(B \otimes I_k)|\beta\rangle \neq 0.$$

\noindent (b) If $\mathcal{M}$ is a unital $*$-subalgebra of $M_m$ and
$\mathcal{V}$ is a quantum relation on $\mathcal{M}$, then there exist
$k \in \mathbb{N}$ and projections $P, Q \in M_k(\mathcal{M})$ such that
$$P(A\otimes I_k)Q = 0\qquad\mbox{ for all }A \in \mathcal{V}$$
but
$$P(B\otimes I_k)Q \neq 0.$$
\end{prop}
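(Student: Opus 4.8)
\emph{Proof plan.} The plan is to prove (a) by a duality computation and then deduce (b) from (a) by a cyclic-subspace construction in which the bimodule hypothesis does the real work.

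For (a) I would take $k = m$ and identify $\mathbb{C}^m\otimes\mathbb{C}^m$ with $M_m$ by sending $\sum_{i,j}c_{ij}\,|i\rangle\otimes|j\rangle$ to the matrix $C = (c_{ij})$. A direct computation then shows that if $|\alpha\rangle$ and $|\beta\rangle$ correspond under this identification to matrices $D$ and $C$, one has $\langle\alpha|(A\otimes I_m)|\beta\rangle = {\rm tr}(D^*AC) = {\rm tr}\big((CD^*)A\big)$. Since the product $CD^*$ ranges over all of $M_m$ as $C$ and $D$ do (take $D = I_m$, $C = X$), the statement reduces to finding $X\in M_m$ with ${\rm tr}(XA) = 0$ for all $A\in\mathcal{V}$ and ${\rm tr}(XB)\neq 0$. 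This is immediate: $\mathcal{V}$ is a proper subspace and $B\notin\mathcal{V}$, so there is a linear functional on $M_m$ vanishing on $\mathcal{V}$ but not at $B$, and every such functional is $A\mapsto{\rm tr}(XA)$ for a unique $X$ because the Hilbert--Schmidt pairing identifies $M_m$ with its own dual. Taking $|\alpha\rangle\leftrightarrow I_m$ and $|\beta\rangle\leftrightarrow X$ then finishes (a).

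For (b) I would first apply (a) to obtain $k\in\mathbb{N}$ and vectors $|\alpha\rangle,|\beta\rangle\in\mathbb{C}^m\otimes\mathbb{C}^k$ with $\langle\alpha|(A\otimes I_k)|\beta\rangle = 0$ for all $A\in\mathcal{V}$ and $\langle\alpha|(B\otimes I_k)|\beta\rangle\neq 0$. Let $P$ and $Q$ be the orthogonal projections onto the cyclic subspaces $(\mathcal{M}'\otimes I_k)|\alpha\rangle$ and $(\mathcal{M}'\otimes I_k)|\beta\rangle$. Because $\mathcal{M}'\otimes I_k$ is a unital $*$-subalgebra of $M_m\otimes M_k$, each of these subspaces and its orthogonal complement is invariant under $\mathcal{M}'\otimes I_k$, so $P$ and $Q$ commute with $\mathcal{M}'\otimes I_k$; hence $P,Q\in(\mathcal{M}'\otimes I_k)' = \mathcal{M}''\otimes M_k = \mathcal{M}\otimes M_k = M_k(\mathcal{M})$, using von Neumann's double commutant theorem together with the (in finite dimensions elementary) commutation theorem for tensor products. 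Since $\mathcal{M}'$ is unital, $|\alpha\rangle$ lies in the range of $P$ and $|\beta\rangle$ in the range of $Q$, so $\langle\alpha|P(B\otimes I_k)Q|\beta\rangle = \langle\alpha|(B\otimes I_k)|\beta\rangle\neq 0$, giving $P(B\otimes I_k)Q\neq 0$. And for $A\in\mathcal{V}$ and $m,n\in\mathcal{M}'$,
$$\langle (m\otimes I_k)\alpha\,|\,(A\otimes I_k)\,|\,(n\otimes I_k)\beta\rangle = \langle\alpha|\big((m^*An)\otimes I_k\big)|\beta\rangle = 0,$$
since $m^*An\in\mathcal{M}'\mathcal{V}\mathcal{M}'\subseteq\mathcal{V}$ ($\mathcal{M}'$ being $*$-closed and $\mathcal{V}$ a quantum relation on $\mathcal{M}$); as such vectors span the ranges of $P$ and $Q$, this forces $P(A\otimes I_k)Q = 0$ for every $A\in\mathcal{V}$.

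The computations in (a) are routine, and the deduction of (b) from (a) is short once one hits on the right projections. I expect the main obstacle to be precisely the choice of $P$ and $Q$: one needs separating projections lying in $M_k(\mathcal{M})$, not merely in $M_{mk}$, and the cyclic-subspace recipe over the commutant $\mathcal{M}'$ is what achieves this --- this is the only step that uses that $\mathcal{V}$ is an $\mathcal{M}'$-$\mathcal{M}'$ bimodule rather than an arbitrary subspace.
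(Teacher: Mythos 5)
Your proposal is correct and follows essentially the same route as the paper: part (a) is the Hilbert--Schmidt/trace duality argument with $k=m$ (your identification $|\alpha\rangle\leftrightarrow I_m$, $|\beta\rangle\leftrightarrow X$ is exactly the paper's choice of $|\alpha\rangle$ as the stacked standard basis vectors and $|\beta\rangle$ as the stacked columns of the separating matrix), and part (b) uses the same cyclic projections onto $(\mathcal{M}'\otimes I_k)|\alpha\rangle$ and $(\mathcal{M}'\otimes I_k)|\beta\rangle$, landing in $M_k(\mathcal{M})$ via the double commutant theorem and using the bimodule property to kill $\mathcal{V}$.
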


\begin{proof}
(a) We prove the result with $k = m$. First, observe that $M_m$ becomes an
inner product space when equipped with the Hilbert-Schmidt inner product
$$\langle A_1,A_2\rangle = {\rm Tr}(A_1^*A_2).$$
So there must exist $C \in M_m$ satisfying
$${\rm Tr}(AC) = 0\qquad\mbox{ for all }A \in \mathcal{V}$$
but
$${\rm Tr}(BC) \neq 0.$$
Let $|c_1\rangle, \ldots, |c_m\rangle \in \mathbb{C}^m$ be the columns of $C$,
let $|e_1\rangle, \ldots, |e_m\rangle$ be
the standard basis vectors in $\mathbb{C}^m$, and let
$|\alpha\rangle, |\beta\rangle \in \mathbb{C}^m\otimes\mathbb{C}^m \cong
\mathbb{C}^m \oplus \cdots \oplus \mathbb{C}^m$ be the vectors
$$|\alpha\rangle = |e_1\rangle \oplus \cdots \oplus |e_m\rangle\qquad
|\beta\rangle = |c_1\rangle \oplus \cdots \oplus |c_m\rangle.$$
Then
$$\langle\alpha|(A \otimes I_m)|\beta\rangle
= \sum_i \langle e_i|A|c_i\rangle
= {\rm Tr}(AC) = 0$$
for all $A \in \mathcal{V}$, and similarly
$\langle\alpha|(B\otimes I_m)|\beta\rangle = {\rm Tr}(BC) \neq 0$.

(b) Find $|\alpha\rangle, |\beta\rangle \in \mathbb{C}^m\otimes\mathbb{C}^m$
as in part (a) and let
$P, Q \in M_m\otimes M_m$ be the orthogonal projections onto
$(\mathcal{M}'\otimes I_m)|\alpha\rangle
= \{(A\otimes I_m)|\alpha\rangle: A \in \mathcal{M}'\}$
and $(\mathcal{M}'\otimes I_m)|\beta\rangle
= \{(A\otimes I_m)|\beta\rangle: A \in \mathcal{M}'\}$,
respectively. Since these spaces are invariant for the $*$-algebra
$\mathcal{M}'\otimes I_m$, $P$ and $Q$ belong to its commutant
$(\mathcal{M}'\otimes I_m)' = \mathcal{M} \otimes M_m \cong M_m(\mathcal{M})$.

Now $Q|\beta\rangle = |\beta\rangle$, so the range of $(B\otimes I_m)Q$
contains the vector $(B\otimes I_m)|\beta\rangle$, which is not orthogonal
to $|\alpha\rangle$. Since $|\alpha\rangle$ belongs to
the range of $P$, it follows that $P(B\otimes I_m)Q \neq 0$. However, if
$A \in \mathcal{V}$ and $A_1, A_2 \in \mathcal{M}'$ then $A_1^*AA_2 \in
\mathcal{V}$, so that
$$\langle(A_1\otimes I_m)\alpha|(A\otimes I_m)|(A_2\otimes I_m)\beta\rangle
= \langle\alpha|(A_1^*AA_2\otimes I_m)|\beta\rangle = 0.$$
Since this is true for any $A_1, A_2 \in \mathcal{M}'$, it follows that
$\langle\alpha'|(A\otimes I_m)|\beta'\rangle = 0$ for all $|\alpha'\rangle$
and $|\beta'\rangle$ in the ranges of $P$ and $Q$, respectively. Thus
$P(A\otimes I_m)Q = 0$.
\end{proof}

Say that $\mathcal{V}$ connects projections $P, Q \in M_k(\mathcal{M})$
if there exists $A \in \mathcal{V}$ such that $P(A\otimes I_k)Q \neq 0$. The
preceding result shows that $\mathcal{V}$ is determined by the pairs of
projections it connects in this manner: we can tell whether a given
$B \in M_m$ belongs to $\mathcal{V}$ by testing whether it connects any
pair of projections that is not connected by $\mathcal{V}$. Since this is
a crucial point, let us emphasize it: if $\mathcal{M}$ is a unital
$*$-subalgebra of $M_m$ then an $\mathcal{M}'$-$\mathcal{M}'$ bimodule is
determined by the pairs of projections in $M_m(\mathcal{M})$ that it connects.

In fact, quantum relations can be characterized abstractly in these
terms. We give the relevant definition first, and then state the
equivalence with Definition \ref{basicdef} as a theorem. To avoid
confusion, we will now refer to quantum relations in the sense of
Definition \ref{basicdef} as {\it concrete} quantum relations.

Let $\mathcal{P}(M_k(\mathcal{M}))$
denote the set of projections in $M_k(\mathcal{M})$, given the topology it
inherits from $M_k(\mathcal{M})$.

\begin{defi}\label{intdef}
(\cite{W}, Definition 2.24)
Let $\mathcal{M}$ be a unital $*$-subalgebra of $M_m$, and for each
$k \in \mathbb{N}$ let $\mathcal{R}_k$ be an open subset of
$\mathcal{P}(M_k(\mathcal{M}))\times\mathcal{P}(M_k(\mathcal{M}))$. Then
the sequence $(\mathcal{R}_k)$ is
an {\it intrinsic quantum relation} if
\smallskip

{\narrower{
\noindent (i) $(0,0) \not\in \mathcal{R}_k$

\noindent (ii) $(\bigvee P_i, \bigvee Q_j) \in \mathcal{R}_k$ if and only
if $(P_{i_0}, Q_{j_0}) \in \mathcal{R}_k$ for some $i_0$, $j_0$

(iii) $(P, [BQ]) \in \mathcal{R}_k$ if and only if
$([B^*P], Q) \in \mathcal{R}_l$
\smallskip}}

\noindent for all $k,l \in \mathbb{N}$, all projections $P, P_i, Q_j \in
M_k(\mathcal{M})$ and $Q \in M_l(\mathcal{M})$, and all scalar matrices
$B \in I_m\otimes M_{k,l}$.
\end{defi}

In condition (ii) the join $\bigvee P_i$ of a finite family of projections
$(P_i)$ is defined to be the orthogonal projection onto the span of their
ranges. In (iii), recall that the notation $[B]$ indicates the range
projection of $B$.

$\mathcal{R}_k$ is to be thought of as the pairs of projections in
$M_k(\mathcal{M})$ which are connected by some concrete quantum relation
$\mathcal{V} \subseteq M_m$.
To say that each $\mathcal{R}_k$ is open is to say that if two projections
are connected then so are any two projections sufficiently close to them.
Condition (i) is trivial, condition (ii) is the basic axiom characterizing
connection, and condition (iii) is a statement about scalar compatibility
that is typical of what one sees when working at matrix levels. The point
is that if $B$ is a scalar matrix then
\begin{eqnarray*}
P(A\otimes I_k)[BQ] = 0 &\Leftrightarrow& P(A\otimes I_k)BQ = 0\cr
&\Leftrightarrow& PB(A\otimes I_l)Q = 0\cr
&\Leftrightarrow& [B^*P](A\otimes I_l)Q = 0,
\end{eqnarray*}
since $(A\otimes I_k)B = B(A\otimes I_l)$.

Proposition \ref{keylemma} (b) shows us how to go from concrete quantum
relations, as characterized by Definition \ref{basicdef}, to intrinsic
quantum relations, axiomatized as in Definition \ref{intdef}. Namely,
given $\mathcal{V}$, for each $k \in \mathbb{N}$
let $\mathcal{R}_k$ be the set of pairs $(P,Q)$
of projections in $M_k(\mathcal{M})$ such that $P(A\otimes I_k)Q \neq 0$
for some $A \in \mathcal{V}$. Conversely, given an intrinsic
quantum relation $(\mathcal{R}_k)$ one recovers the concrete quantum
relation that corresponds to it as the set of $A \in M_m$ satisfying
$$P(A \otimes I_k)Q = 0$$
for all $k \in \mathbb{N}$ and all $(P,Q) \not\in \mathcal{R}_k$, i.e.,
the set of matrices which do not connect any pair of projections they are
not supposed to connect.

\begin{theo}\label{axioms}
(\cite{W}, Theorem 2.32)
For any unital $*$-subalgebra $\mathcal{M}$ of $M_m$, the two constructions
just described establish a 1-1 correspondence between the concrete
and intrinsic quantum relations on $\mathcal{M}$.
\end{theo}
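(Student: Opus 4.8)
The plan is to verify that the two maps described before the theorem — call them $\mathcal{V}\mapsto(\mathcal{R}_k^{\mathcal{V}})$ going from concrete to intrinsic, and $(\mathcal{R}_k)\mapsto\mathcal{V}_{(\mathcal{R}_k)}$ going back — are mutually inverse bijections. This breaks into three things: (1) if $\mathcal{V}$ is a concrete quantum relation, then $(\mathcal{R}_k^{\mathcal{V}})$ really is an intrinsic quantum relation (i.e.\ it satisfies openness and axioms (i)--(iii)); (2) if $(\mathcal{R}_k)$ is an intrinsic quantum relation, then $\mathcal{V}_{(\mathcal{R}_k)}$ is an $\mathcal{M}'$-$\mathcal{M}'$ bimodule; and (3) the two round trips are the identity. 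I would do (3) first in one direction, since it is the heart of the matter and is exactly Proposition \ref{keylemma}(b): starting from $\mathcal{V}$, forming $(\mathcal{R}_k^{\mathcal{V}})$, and then taking $\mathcal{V}_{(\mathcal{R}_k^{\mathcal{V}})} = \{B : P(B\otimes I_k)Q = 0 \text{ whenever } (P,Q)\notin\mathcal{R}_k^{\mathcal{V}}\}$, we clearly recover a set containing $\mathcal{V}$; and Proposition \ref{keylemma}(b) says precisely that any $B\notin\mathcal{V}$ connects some pair $(P,Q)$ with $(P,Q)\notin\mathcal{R}_k^{\mathcal{V}}$, so it is not in the recovered set. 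Hence $\mathcal{V}_{(\mathcal{R}_k^{\mathcal{V}})} = \mathcal{V}$.

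For step (1), openness of $\mathcal{R}_k^{\mathcal{V}}$ follows because the map $(P,Q,A)\mapsto P(A\otimes I_k)Q$ is continuous and $\mathcal{V}$ is finite-dimensional, so nonvanishing for some $A$ in a basis of $\mathcal{V}$ is an open condition on $(P,Q)$. Axiom (i) is immediate. Axiom (ii) in the ``if'' direction is trivial since $P_{i_0}\le\bigvee P_i$; for ``only if,'' if $(\bigvee P_i)(A\otimes I_k)(\bigvee Q_j)\neq 0$ then, writing $\bigvee P_i$ as the projection onto the span of the ranges, some compression $P_i(A\otimes I_k)Q_j$ must be nonzero — this uses that $A\otimes I_k$ maps the range of $\bigvee Q_j$ into a space having nonzero overlap with the range of $\bigvee P_i$, together with the observation that the ranges of the $P_i$ span and likewise for the $Q_j$, so orthogonality against every $P_i(\cdot)Q_j$ would force orthogonality against the joins. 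Axiom (iii) is the displayed computation: $P(A\otimes I_k)[BQ]=0 \iff P(A\otimes I_k)BQ = 0 \iff PB(A\otimes I_l)Q = 0 \iff [B^*P](A\otimes I_l)Q = 0$, valid because $B$ scalar means $(A\otimes I_k)B = B(A\otimes I_l)$, and passing to range projections on each side is harmless. Running this through the existential quantifier over $A\in\mathcal{V}$ gives (iii) for $\mathcal{R}^{\mathcal{V}}$.

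For step (2), I must check $\mathcal{M}'\mathcal{V}_{(\mathcal{R}_k)}\mathcal{M}'\subseteq\mathcal{V}_{(\mathcal{R}_k)}$. The trick is that left or right multiplication of $B\otimes I_k$ by an element of $\mathcal{M}'$ can be absorbed into the projections $P,Q$ using axiom (iii) together with (ii): for $A_1,A_2\in\mathcal{M}'$ and $B\in\mathcal{V}_{(\mathcal{R}_k)}$, one shows $P(A_1BA_2\otimes I_k)Q\neq 0$ would force some pair in $\mathcal{R}_k$ involving range projections built from $B$ and the $A_i$, contradicting $B\in\mathcal{V}_{(\mathcal{R}_k)}$; the manipulations mirror the scalar-compatibility identities and the join axiom, which is why these axioms were chosen. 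Closure under linear combinations is where axiom (ii) does real work: if $B_1,B_2$ each fail to connect $(P,Q)$, a single $A$ witnessing $P((B_1+B_2)\otimes I_k)Q\neq 0$ need not witness it for $B_1$ or $B_2$ separately, but by decomposing via joins of range projections one reduces to the case of a single rank-one-ish compression where linearity is visible. Finally the remaining round trip, $(\mathcal{R}_k)\mapsto\mathcal{V}_{(\mathcal{R}_k)}\mapsto(\mathcal{R}_k^{\mathcal{V}_{(\mathcal{R}_k)}})$, returns $(\mathcal{R}_k)$: one inclusion is formal, and the reverse — that every pair genuinely in $\mathcal{R}_k$ is connected by some $A\in\mathcal{V}_{(\mathcal{R}_k)}$ — is the part I expect to be the main obstacle, since it requires producing an actual matrix in the bimodule connecting a prescribed pair, presumably by a duality/dimension-counting argument against the closed proper subspaces cut out by the complementary pairs, in the spirit of Proposition \ref{keylemma}(a). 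Openness of $\mathcal{R}_k$ is what makes this feasible, as it ensures the ``forbidden'' set of pairs is closed and the constraint defining $\mathcal{V}_{(\mathcal{R}_k)}$ is not degenerate.
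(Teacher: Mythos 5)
The paper does not actually prove this theorem: it cites \cite{W}, Theorem 2.32, and immediately remarks that ``the proof of Theorem \ref{axioms} is somewhat complicated.'' So your proposal has to be judged on its own, and as it stands it has a genuine gap exactly where you admit one. Your treatment of the injective direction is correct and matches the paper's setup: Proposition \ref{keylemma}(b) is precisely the statement that $\mathcal{V}_{(\mathcal{R}_k^{\mathcal{V}})} = \mathcal{V}$, and your verification that $(\mathcal{R}_k^{\mathcal{V}})$ satisfies openness and axioms (i)--(iii) is essentially right (axiom (iii) is even computed in the text after Definition \ref{intdef}). But two of your four steps are off. First, a local point: closure of $\mathcal{V}_{(\mathcal{R}_k)}$ under linear combinations is \emph{automatic} --- it is an intersection of kernels of the linear maps $A \mapsto P(A\otimes I_k)Q$ over the forbidden pairs --- so axiom (ii) does no work there; where (ii) actually earns its keep in step (2) is in giving monotonicity (if $P' \leq P$, $Q' \leq Q$ and $(P,Q)\notin\mathcal{R}_k$ then $(P',Q')\notin\mathcal{R}_k$, by taking joins over the two-element families $\{P',P\}$ and $\{Q',Q\}$), which combined with the observation that $[(C_1^*\otimes I_k)P] \leq P$ for $C_1 \in \mathcal{M}'$ (since $\mathrm{ran}(P)$ is invariant under $\mathcal{M}'\otimes I_k$) yields the bimodule property cleanly. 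Your sketch of that step gestures at the right axioms but gets the mechanism wrong.

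The serious gap is the second round trip, $(\mathcal{R}_k) \mapsto \mathcal{V}_{(\mathcal{R}_k)} \mapsto (\mathcal{R}_k^{\mathcal{V}_{(\mathcal{R}_k)}}) = (\mathcal{R}_k)$, equivalently surjectivity of the concrete-to-intrinsic map. You correctly identify this as the main obstacle and then offer only ``presumably by a duality/dimension-counting argument\ldots in the spirit of Proposition \ref{keylemma}(a).'' That analogy does not carry over: Proposition \ref{keylemma}(a) separates a single matrix $B$ from a given proper subspace using one Hilbert--Schmidt functional, whereas here you must show that the subspace cut out by the entire family of constraints $\{P(A\otimes I_k)Q = 0 : (P,Q)\notin\mathcal{R}_k,\ k\in\mathbb{N}\}$ still contains an element connecting each prescribed pair $(P_0,Q_0)\in\mathcal{R}_k$. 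Nothing in a dimension count rules out the a priori possibility that the forbidden pairs impose so many constraints that $\mathcal{V}_{(\mathcal{R}_k)}$ fails to connect $(P_0,Q_0)$; excluding this is where openness and axioms (ii) and (iii) must all be used in an essential, constructive way, and it is exactly the part the paper flags as complicated and outsources to \cite{W}. Until that argument is supplied, the proposal establishes only that the concrete-to-intrinsic map is injective, not that it is a bijection onto the intrinsic quantum relations.
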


The proof of Theorem \ref{axioms} is somewhat complicated.

Observe that the characterization of quantum relations provided by
Definition \ref{intdef} is ``intrinsic'' to $\mathcal{M}$ in the sense
that it makes no reference to the ambient matrix algebra in which
$\mathcal{M}$ is located. It is manifestly compatible with $*$-isomorphisms.

\section{Restrictions}

Theorem \ref{axioms} allows us to pass back and forth between concrete
and intrinsic quantum relations, and we will do this repeatedly in the
sequel.

An $\mathcal{M}'$-$\mathcal{M}'$ bimodule is a straightforward object,
especially when $\mathcal{M} = M_m$ and $\mathcal{M}' = \mathbb{C}\cdot I_m$.
The value in having a more complicated intrinsic characterization in terms
of connecting projections is that some constructions are more naturally
understood in these terms. For instance, the natural notion of ``subobject''
is the following.

\begin{defi}\label{restrict}
Let $\mathcal{M}$ be a unital $*$-subalgebra of $M_m$, let
$(\mathcal{R}_k)$ be an intrinsic quantum relation
on $\mathcal{M}$, and let $E \in \mathcal{M}$ be a projection of rank $r$.
The {\it restriction} of $(\mathcal{R}_k)$ to
$E\mathcal{M}E \subseteq EM_mE \cong M_r$ is the intrinsic quantum relation
$(\tilde{\mathcal{R}}_k)$ on $E\mathcal{M}E$ defined by setting
$$\tilde{\mathcal{R}}_k = \{(P,Q) \in \mathcal{R}_k: P, Q \leq E\otimes I_k\}$$
for all $k \in \mathbb{N}$.
\end{defi}

It is straightforward to verify that $(\tilde{\mathcal{R}}_k)$ as defined
above has the properties of an intrinsic quantum relation described
in Definition \ref{intdef}. So according to Theorem \ref{axioms},
if $(\mathcal{R}_k)$ is associated to the concrete
quantum relation $\mathcal{V} \subseteq M_m$, its restriction
$(\tilde{\mathcal{R}}_k)$ must be associated to a concrete quantum relation
$\tilde{\mathcal{V}} \subseteq M_r$ on $E\mathcal{M}E$. This concrete
restriction has a simple direct characterization:

\begin{prop}\label{restrictchar}
Let $\mathcal{V}$ be a concrete quantum relation on $\mathcal{M}
\subseteq M_m$ and let $E \in \mathcal{M}$ be a projection. Then the
restriction $\tilde{\mathcal{V}}$ of $\mathcal{V}$ to $E\mathcal{M}E$
is concretely given as $\tilde{\mathcal{V}} = E\mathcal{V}E$.
\end{prop}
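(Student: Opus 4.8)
The plan is to use Theorem~\ref{axioms} as a black box: both $\tilde{\mathcal{V}}$ (the concrete relation corresponding to the restricted intrinsic relation $(\tilde{\mathcal{R}}_k)$) and $E\mathcal{V}E$ are concrete quantum relations on $E\mathcal{M}E$, so it suffices to show that they connect the same pairs of projections $P, Q \in M_k(E\mathcal{M}E)$. A projection in $M_k(E\mathcal{M}E)$ is exactly a projection in $M_k(\mathcal{M})$ satisfying $P \leq E \otimes I_k$ (and likewise for $Q$); under the identification $EM_mE \cong M_r$ the tensor $B \otimes I_k$ for $B \in EM_mE$ corresponds to $(EBE)\otimes I_k$ acting on the range of $E \otimes I_k$. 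So I want to prove: for projections $P, Q \leq E \otimes I_k$ in $M_k(\mathcal{M})$, there exists $A \in \mathcal{V}$ with $P(A \otimes I_k)Q \neq 0$ if and only if there exists $A' \in E\mathcal{V}E$ with $P(A' \otimes I_k)Q \neq 0$.

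First I would check that $E\mathcal{V}E$ is genuinely a concrete quantum relation on $E\mathcal{M}E$: since $(E\mathcal{M}E)' $ inside $EM_mE$ equals $E\mathcal{M}'E$ (this is a standard compression fact, because $E \in \mathcal{M}$ implies $E \in \mathcal{M}''$ and one reduces the commutant accordingly), and $\mathcal{M}'\mathcal{V}\mathcal{M}' \subseteq \mathcal{V}$ gives $(E\mathcal{M}'E)(E\mathcal{V}E)(E\mathcal{M}'E) \subseteq E\mathcal{M}'\mathcal{V}\mathcal{M}'E \subseteq E\mathcal{V}E$. The forward direction of the connection equivalence is then the easy one: given $A \in \mathcal{V}$ with $P(A\otimes I_k)Q \neq 0$, since $P \leq E\otimes I_k$ we have $P = P(E\otimes I_k)$ and similarly $Q = (E\otimes I_k)Q$, hence $P(A\otimes I_k)Q = P((EAE)\otimes I_k)Q \neq 0$ with $EAE \in E\mathcal{V}E$. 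The reverse direction is essentially the same computation run backwards: an element of $E\mathcal{V}E$ has the form $EAE$ for some $A \in \mathcal{V}$, and $P((EAE)\otimes I_k)Q = P(A\otimes I_k)Q$ again using $P = P(E\otimes I_k)$, $Q = (E\otimes I_k)Q$. So the two relations connect exactly the same pairs of projections beneath $E \otimes I_k$.

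The one point requiring care --- and the place I expect the main obstacle --- is making the bookkeeping between $M_k(E\mathcal{M}E)$ and $\{P \in \mathcal{P}(M_k(\mathcal{M})): P \leq E\otimes I_k\}$ precise, and correspondingly checking that the intrinsic relation $(\tilde{\mathcal{R}}_k)$ as defined in Definition~\ref{restrict} really is the intrinsic relation attached to $E\mathcal{V}E$ by the construction preceding Theorem~\ref{axioms}. Concretely, $(\tilde{\mathcal{R}}_k)$ is defined as $\{(P,Q) \in \mathcal{R}_k : P, Q \leq E \otimes I_k\}$ where $(\mathcal{R}_k)$ is the intrinsic relation of $\mathcal{V}$, i.e.\ $(P,Q) \in \mathcal{R}_k$ iff $P(A\otimes I_k)Q \neq 0$ for some $A \in \mathcal{V}$; meanwhile the intrinsic relation of $E\mathcal{V}E$ consists of those $(P,Q)$ with $P,Q \in \mathcal{P}(M_k(E\mathcal{M}E))$ such that $P((EAE)\otimes I_k)Q \neq 0$ for some $A \in \mathcal{V}$. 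The computation of the previous paragraph shows these coincide. Invoking Theorem~\ref{axioms} (applied to the algebra $E\mathcal{M}E \subseteq M_r$) then forces the concrete relations $\tilde{\mathcal{V}}$ and $E\mathcal{V}E$ to agree, completing the proof. I would keep the write-up short, since modulo the identification $EM_mE \cong M_r$ everything reduces to the two-line identity $P(A\otimes I_k)Q = P((EAE)\otimes I_k)Q$ for $P,Q \leq E\otimes I_k$.
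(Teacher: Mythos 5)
Your proposal is correct and follows essentially the same route as the paper: establish $(E\mathcal{M}E)' = E\mathcal{M}'E$ so that $E\mathcal{V}E$ is a genuine quantum relation on $E\mathcal{M}E$, observe that for $P,Q \leq E\otimes I_k$ one has $P(A\otimes I_k)Q = P\bigl((EAE)\otimes I_k\bigr)Q$, conclude that the intrinsic relation of $E\mathcal{V}E$ coincides with the restricted relation $(\tilde{\mathcal{R}}_k)$, and invoke Theorem~\ref{axioms}. The only difference is that the paper spells out the commutant identity $(E\mathcal{M}E)' = E\mathcal{M}'E$ via the double commutant theorem, whereas you cite it as a standard compression fact.
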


\begin{proof}
First observe that the commutant of $E\mathcal{M}E$ in $EM_mE \cong M_r$ is
$E\mathcal{M}'E$. The containment $E\mathcal{M}'E \subseteq (E\mathcal{M}E)'$
is clear because
$$(EBE)(EAE) = EBAE = EABE = (EAE)(EBE)$$
for all $A \in \mathcal{M}$ and $B \in \mathcal{M}'$,
showing that everything in $E\mathcal{M}'E$
commutes with everything in $E\mathcal{M}E$. For the reverse containment,
by the double commutant theorem it suffices to show that
$(E\mathcal{M}'E)' \subseteq E\mathcal{M}E$. So let
$A \in M_r \cong EM_mE$ belong
to the commutant of $E\mathcal{M}'E$. Regarding $A$ as an element of $M_m$
satisfying $A = EAE$, this means that $(EBE)A = A(EBE)$ for all
$B \in \mathcal{M}'$. But $BE = EB$ and $AE = EA = A$, so it follows
that $BA = AB$ for all $B \in \mathcal{M}'$, i.e., by the double commutant
theorem, $A \in \mathcal{M}$. Thus $A \in E\mathcal{M}E$, as desired.

The computations
$$(EAE)(EBE) = EABE\qquad{\rm and}\qquad (EBE)(EAE) = EBAE$$
for $A \in \mathcal{V}$ and $B \in \mathcal{M}'$ now show that
$E\mathcal{V}E$ is a bimodule over $(E\mathcal{M}E)'
= E\mathcal{M}'E$, i.e., it is a quantum relation on $E\mathcal{M}E$.

Now let $(\mathcal{R}_k)$ be the intrinsic quantum relation on $\mathcal{M}$
corresponding to $\mathcal{V}$, $(\tilde{\mathcal{R}}_k)$ the restriction of
$(\mathcal{R}_k)$ to $E\mathcal{M}E$ according to Definition \ref{restrict},
and $(\tilde{\mathcal{R}}'_k)$ the intrinsic quantum relation on
$E\mathcal{M}E$ corresponding to $E\mathcal{V}E$. We must show
that $(\tilde{\mathcal{R}}_k) = (\tilde{\mathcal{R}}'_k)$.

Fix $k \in \mathbb{N}$.
In one direction, if $(P,Q) \in \tilde{\mathcal{R}}_k'$ then there
exists $EAE \in E\mathcal{V}E$ such that
$$P(EAE\otimes I_k)Q \neq 0.$$
But since $P,Q \leq E\otimes I_k$ and
$$EAE\otimes I_k = (E\otimes I_k)(A\otimes I_k)(E\otimes I_k),$$
this implies that $P(A\otimes I_k)Q \neq 0$,
so that $(P,Q)$ belongs to $\mathcal{R}_k$ and therefore to
$\tilde{\mathcal{R}}_k$. Conversely, if $(P,Q) \in \tilde{\mathcal{R}}_k$
then $(P,Q) \in \mathcal{R}_k$ and so $P(A\otimes I_k)Q \neq 0$
for some $A \in \mathcal{V}$. But since $P,Q \leq E\otimes I_k$, this
implies that $P(EAE\otimes I_k)Q \neq 0$, and we therefore have
$(P,Q) \in \tilde{\mathcal{R}}_k'$. This completes the proof of the
desired equality.
\end{proof}

Although this concrete description of the restriction of $\mathcal{V}$ to
$E\mathcal{M}E$ is very simple, it is the intrinsic formulation given
in Definition \ref{restrict} which brings out its role as a ``restriction''.

The following definition now becomes natural.

\begin{defi}\label{independent}
Let $\mathcal{V}$ be a quantum graph (a reflexive, symmetric quantum relation)
on $\mathcal{M} \subseteq M_m$ and let $E \in \mathcal{M}$ be a projection.
Then $E$ is {\it independent} if the restriction of $\mathcal{V}$ to
$E\mathcal{M}E$ is the diagonal quantum relation on $E\mathcal{M}E$.
\end{defi}

In the case $\mathcal{M} = D_m$, the projection $E$ corresponds to a subset
of $\{1, \ldots, m\}$, and $E$ is independent in the above sense if and
only if the classical graph corresponding to $\mathcal{V}$ has no nontrivial
edges in this subset. That is, Definition \ref{independent}
generalizes the classical notion of an independent subset of a graph.
In the case $\mathcal{M} = M_m$, the independence condition simply states that
$$E\mathcal{V}E = \mathbb{C}\cdot E,$$
which, as we noted earlier, expresses the Knill-Laflamme error correction
conditions. So the notion of independence yields a common generalization
of classical and quantum codes.

\section{Pushforwards}

Classically, if $f: X \to Y$ is a function between sets then we can push
any relation $R$ on $X$ forward to a relation on $Y$, namely, the relation
$\{(f(x), f(y)): (x,y) \in R\}$. Similarly, any relation $R'$ on $Y$ can be
pulled back to the relation $\{(x,y): (f(x), f(y)) \in R'\}$ on $X$.
We now seek quantum versions of these constructions.

The first point to make is that the classical analog of a quantum channel
is not an actual function between sets, but a classical channel which maps
points in the domain to probability distributions in the range (representing
the likelihood of the given input state being received as various output
states). In this context the pushforward of a relation $R$ on $X$ would
consist of the pairs $(x',y') \in Y^2$ such that there exists a pair
$(x,y) \in R$
for which the transition probabilities $x \to x'$ and $y \to y'$ are both
nonzero. The pullback of a relation $S$ on $Y$ would consist of the pairs
$(x,y) \in X^2$ such that there exists a pair $(x',y') \in S$ for which the
transition probabilities $x \to x'$ and $y \to y'$ are both nonzero.

Since we are working with unital $*$-algebras, it is natural to adopt the
Heisenberg picture in which algebras of observables transform. Mathematically,
this means that instead of the CPTP map $\Phi: \rho \mapsto \sum K_i\rho K_i^*$
from $M_m$ to $M_n$ mentioned in Section 1, which acts on states, we consider
the adjoint map $\Psi: A \mapsto \sum K_i^*AK_i$ from $M_n$ to $M_m$, which
acts on observables. The adjoint of a CPTP map is a unital CP (unital
completely positive) map. Taking adjoints reverses arrows, so that
pushforwards become pullbacks and vice versa; consequently, to maintain
consistency with Section 3 we will continue to take the CPTP map
$\Phi: \mathcal{M} \to \mathcal{N}$ as primary, even though at this point
it becomes less natural. The map $\Phi$ really should be understood as a
map from the predual of
$\mathcal{M}$ to the predual of $\mathcal{N}$ whose adjoint unital CP map
$\Psi = \Phi^*$ takes the $*$-algebra $\mathcal{N}$ to the $*$-algebra
$\mathcal{M}$, but any finite-dimensional $*$-algebra can be identified with
its predual via the pairing
$(A, B) \mapsto {\rm Tr}(AB)$, so we need not make this distinction.

The unital CP maps which correspond to actual functions between sets are the
unital $*$-homomorphisms, linear maps $\Psi: \mathcal{N} \to \mathcal{M}$
which preserve the identity and respect the product and adjoint operations.
If $\Psi = \Phi^*$ is a $*$-homomorphism and
$(\mathcal{R}_k)$ is an intrinsic quantum relation on $\mathcal{M}$, there is
an obvious way to push forward a quantum relation $(\mathcal{R}_k)$ on
$\mathcal{M}$ along $\Phi$ to a quantum relation $(\tilde{\mathcal{R}}_k)$
on $\mathcal{N}$. Namely, for each $k \in \mathbb{N}$ let
$\tilde{\mathcal{R}}_k$ consist of those pairs of projections
$P, Q \in M_k(\mathcal{N})$ with the property that
$(\Psi(P), \Psi(Q)) \in \mathcal{R}_k$. (Here we abuse notation and also denote
by $\Psi$ the map from $M_k(\mathcal{N})$ to $M_k(\mathcal{M})$ which applies
$\Psi$ entrywise.) This definition makes sense because the
$*$-homomorphism property ensures that $\Psi(P)$ and $\Psi(Q)$ are projections.
It is easy to check that the preceding construction does yield an intrinsic
quantum relation on $\mathcal{N}$ (\cite{W}, Proposition 2.25 (b)).

But we are interested in general CPTP maps, not just those whose adjoint
maps are $*$-homomorphisms. The construction just described no longer works
because the image of a projection under such a map need not be a projection.
However, there is a simple solution to this difficulty. Let us consider
two positive matrices $A, B \in M_n$ to be equivalent if $[A] = [B]$.
Since the range of a Hermitian matrix is the orthocomplement of its kernel,
this condition could also be stated as ${\rm ker}(A) = {\rm ker}(B)$.
This notion of equivalence is suitable here because whether positive matrices
are connected by a quantum relation depends only on their range projections.

\begin{lemma}\label{basiclemma}
Let $\mathcal{M}$ and $\mathcal{N}$ be unital $*$-subalgebras of $M_m$ and
$M_n$, respectively, let $A, B \in \mathcal{N}$ be positive, and let
$\Psi: \mathcal{N} \to \mathcal{M}$ be a CP map.
Then $[A] = [B]$ implies $[\Psi(A)] = [\Psi(B)]$.
\end{lemma}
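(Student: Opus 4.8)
The plan is to reduce the statement to the case of a single positive matrix, and then exploit the fact that the range projection $[A]$ of a positive matrix $A$ records exactly where $A$ is ``supported,'' together with positivity of $\Psi$. The key observation is that for a positive matrix $A$ and a positive map $\Psi$, the kernel of $\Psi(A)$ contains the ``image'' of the kernel of $A$ in a suitable sense, and conversely. Concretely, I would first note that $[A]=[B]$ means $A$ and $B$ have the same kernel, hence there exist constants $c, d > 0$ with $A \leq cB$ and $B \leq dA$ (both inequalities hold because $A$ and $B$ are positive with the same range, so on that common range each dominates a positive multiple of the other, and both annihilate the common kernel). Applying the positive map $\Psi$, which preserves the order relation on Hermitian matrices, gives $\Psi(A) \leq c\Psi(B)$ and $\Psi(B) \leq d\Psi(A)$.

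The second step is to deduce $[\Psi(A)] = [\Psi(B)]$ from these two operator inequalities. For positive matrices $X$ and $Y$, the inequality $X \leq \lambda Y$ implies $\ker(Y) \subseteq \ker(X)$: if $Y|\xi\rangle = 0$ then $\langle\xi|X|\xi\rangle \leq \lambda\langle\xi|Y|\xi\rangle = 0$, and since $X$ is positive this forces $X|\xi\rangle = 0$. Equivalently, $[X] \leq [Y]$. Applying this to $\Psi(A) \leq c\Psi(B)$ gives $[\Psi(A)] \leq [\Psi(B)]$, and applying it to $\Psi(B) \leq d\Psi(A)$ gives the reverse inclusion $[\Psi(B)] \leq [\Psi(A)]$. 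Hence $[\Psi(A)] = [\Psi(B)]$, which is the conclusion.

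The only place requiring a little care — and the step I expect to be the main (though still modest) obstacle — is justifying the existence of the constants $c, d$ in the first step, i.e.\ that $[A] = [B]$ genuinely yields two-sided domination $A \leq cB$ and $B \leq dA$. This follows by compressing to the common range projection $E = [A] = [B]$: both $A$ and $B$ vanish on $(I-E)$, so it suffices to compare $EAE$ and $EBE$ as operators on the range of $E$, where both are positive and invertible; then $c$ can be taken to be $\|A\|\,\|(EBE)^{-1}\|$ (restricted to the range of $E$) and symmetrically for $d$. One should also remark that complete positivity is not needed here — ordinary positivity of $\Psi$ suffices, since we only use that $\Psi$ preserves the partial order on Hermitian matrices — but stating the lemma for CP maps is harmless and matches how it will be used. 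No appeal to the earlier structural results (Theorem~\ref{axioms} and the like) is needed for this lemma; it is a purely order-theoretic fact about positive maps.
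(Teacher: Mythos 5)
Your proof is correct, but it takes a genuinely different route from the paper's. The paper writes $\Psi(C)=\sum K_i^*CK_i$ via Kraus operators and proves two sub-claims: that $[X+Y]=[X]\vee[Y]$ for positive $X,Y$, and that $[A]=[B]$ implies $[K^*AK]=[K^*BK]$; combining these gives $[\Psi(A)]=\bigvee[K_i^*AK_i]=\bigvee[K_i^*BK_i]=[\Psi(B)]$. You instead avoid the Kraus decomposition entirely: from $[A]=[B]=E$ you extract two-sided scalar domination $A\le cB$ and $B\le dA$ (with $c=\|A\|\cdot\|(EBE)^{-1}\|$ on the range of $E$, i.e.\ $\|A\|$ over the smallest nonzero eigenvalue of $B$), push these inequalities through the positive map, and use the elementary fact that $X\le\lambda Y$ forces $\ker Y\subseteq\ker X$, i.e.\ $[X]\le[Y]$. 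Each step checks out, and your observation that only positivity (not complete positivity) is used is a genuine sharpening in the finite-dimensional setting. The trade-off is that your step 1 is intrinsically finite-dimensional: it needs the nonzero spectrum of $B$ to be bounded away from $0$, and for positive operators on an infinite-dimensional Hilbert space equal kernels do not yield mutual scalar domination (e.g.\ $\mathrm{diag}(1/n)$ versus $\mathrm{diag}(1/n^2)$). The paper's Kraus-operator argument is the one that survives the passage to normal CP maps between von Neumann algebras via the Stinespring theorem, which the author flags in the remark immediately after the lemma; its sub-claims also reappear implicitly in the proofs of Theorems \ref{concretepb} and \ref{concretepf}.
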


\begin{proof}

Recall that the join $P \vee Q$ of two projections $P$ and $Q$ is the
orthogonal projection onto the span of their ranges. We first claim
that $[A+B] = [A] \vee [B]$. (We are not yet assuming $[A] = [B]$, only
that $A$ and $B$ are both positive.)
That is, we claim that ${\rm ran}(A + B) = {\rm ran}(A) + {\rm ran}(B)$.
The containment $\subseteq$ is clear. Conversely, suppose $|\alpha\rangle \perp
{\rm ran}(A + B)$, i.e., $|\alpha\rangle \in {\rm ker}(A + B)$. Then
$$0 = \langle\alpha|(A + B)|\alpha\rangle
= \langle\alpha|A|\alpha\rangle + \langle\alpha|B|\alpha\rangle.$$
Since $A$ and $B$ are positive, this implies that
$\langle\alpha|A|\alpha\rangle = \langle\alpha|B|\alpha\rangle = 0$ and
therefore that $A|\alpha\rangle = B|\alpha\rangle = 0$. So
$|\alpha\rangle \perp {\rm ran}(A)$ and $|\alpha\rangle \perp {\rm ran}(B)$,
and therefore $|\alpha\rangle \perp {\rm ran}(A) + {\rm ran}(B)$. This shows
that ${\rm ran}(A) + {\rm ran}(B) \subseteq {\rm ran}(A + B)$, and so the
first claim is proven.

Now assume $[A] = [B]$. We next claim that $[K^*AK] = [K^*BK]$ for any
$n\times m$ matrix $K$. To see this, let $|\alpha\rangle \in {\rm ker}(K^*AK)$.
Then $\langle\alpha|K^*AK|\alpha\rangle = 0$, that is,
$\langle A^{1/2}K\alpha|A^{1/2}K\alpha\rangle = 0$, and this implies that
$K|\alpha\rangle \in {\rm ker}(A^{1/2}) = {\rm ker}(A)$. Since $[A] = [B]$,
we get $K|\alpha\rangle \in {\rm ker}(B)$, and therefore
$|\alpha\rangle \in {\rm ker}(K^*BK)$. So we
have shown that ${\rm ker}(K^*AK) \subseteq {\rm ker}(K^*BK)$. By
symmetry the reverse containment also holds, so we conclude that the
two kernels are equal, i.e., $[K^*AK] = [K^*BK]$.

We can now prove the lemma.
We have $\Psi(C) = \sum K_i^*CK_i$ for some finite family of
$n \times m$ matrices $K_i$. So, using the two claims, we have
$$[\Psi(A)] = \left[\sum K_i^*AK_i\right] = \bigvee [K_i^*AK_i] =
\bigvee [K_i^*BK_i] = \left[\sum K_i^*BK_i\right] = [\Psi(B)],$$
as desired.
\end{proof}

We note that a version of Lemma \ref{basiclemma} for
normal CP maps between von Neumann algebras can be proven using
the normal Stinespring theorem (\cite{Bl}, Theorem III.2.2.4).

We can now describe the appropriate version of the pushforward construction
for CP maps. Here we return to the ``connecting mixed states'' point
of view; note that if $A, C \in \mathcal{M}\otimes M_k$ are positive then
since $\Psi$ is completely positive, $\Psi(A)$ and
$\Psi(C)$ will also be positive. Lemma \ref{basiclemma} shows that CP maps
preserve the relevant notion of equivalence between positive matrices.

\begin{defi}\label{pushforward}
Let $\mathcal{M} \subseteq M_m$ and $\mathcal{N} \subseteq M_n$ be
unital $*$-subalgebras and let $\Phi: \mathcal{M} \to \mathcal{N}$ be
a CP map. Suppose $(\mathcal{R}_k)$ is an intrinsic quantum relation on
$\mathcal{M}$. Then its {\it pushforward along $\Phi$} is the intrinsic
quantum relation
$(\overrightarrow{\mathcal{R}}_k)$ on $\mathcal{N}$ defined by, for each
$k \in \mathbb{N}$, letting $(P,Q)$ belong to $\overrightarrow{\mathcal{R}}_k$
if $([\Phi^*(P)], [\Phi^*(Q)])$ belongs to $\mathcal{R}_k$.
\end{defi}

In order to justify this definition, we must check that
$(\overrightarrow{\mathcal{R}}_k)$ satisfies the axioms given in Definition
\ref{intdef}. This can be done directly using Lemma \ref{basiclemma}, but
according to Theorem \ref{axioms}, it can also be done by finding
a concrete quantum relation $\mathcal{W}$ on $\mathcal{N}$ with the property
that $(P,Q) \in \overleftarrow{\mathcal{R}}_k$ if and only if
$P(A\otimes I_k)Q \neq 0$ for some $A \in \mathcal{W}$. This will be
achieved in Theorem \ref{concretepb} below. Thus, that theorem will
simultaneously establish that the pushforward construction is well-defined
and identify its concrete formulation.

We require two simple facts about positive matrices.

\begin{lemma}\label{poslem}
(a) Let $A, C \in M_m$ be positive, let $B \in M_n$, and
let $K_1, K_2 \in M_{m,n}$. Then $K_1^*AK_1BK_2^*CK_2 = 0$ if and only
if $AK_1BK_2^*C = 0$.

\noindent (b) Let $A, X_i, Y_j \in M_m$ and suppose the $X_i$ and $Y_j$ are
positive. Then $(\sum X_i)A(\sum Y_j) = 0$ if and only if $X_iAY_j = 0$ for
all $i$ and $j$.
\end{lemma}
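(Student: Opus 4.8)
The plan is to handle both parts by the same basic device: for a positive matrix $A$, a product $MA N$ vanishes if and only if $M A^{1/2} = 0$ or, dually, $A^{1/2} N = 0$, because $A = A^{1/2}A^{1/2}$ and multiplication by $A^{1/2}$ does not lose information on the range. More precisely, for part (a) I would write $A = A^{1/2}A^{1/2}$ and $C = C^{1/2}C^{1/2}$, so that
\[
K_1^*AK_1BK_2^*CK_2 = (K_1^*A^{1/2})\,(A^{1/2}K_1BK_2^*C^{1/2})\,(C^{1/2}K_2).
\]
The forward implication is the substantive one; the reverse is immediate since $AK_1BK_2^*C = 0$ gives $K_1^*AK_1BK_2^*CK_2 = 0$ after multiplying by $K_1^*$ on the left and $K_2$ on the right. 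For the forward direction, set $T = A^{1/2}K_1BK_2^*C^{1/2}$ and observe that the hypothesis says $(K_1^*A^{1/2})T(C^{1/2}K_2) = 0$; multiplying on the left by $A^{1/2}K_1$ and on the right by $K_2^*C^{1/2}$ yields $TT^* \cdot(\text{something})$—more cleanly, multiply the original identity on the left by $A^{1/2}K_1$ and on the right by $K_2^*C^{1/2}$ to get $A^{1/2}K_1K_1^*A^{1/2}\,T\,C^{1/2}K_2K_2^*C^{1/2}=0$, which is $(K_1^*A^{1/2})^*(K_1^*A^{1/2})\,T\,(C^{1/2}K_2)(C^{1/2}K_2)^*=0$. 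Taking the trace after multiplying appropriately, or simply noting that $S^*SX XX^* = 0$ with $S = K_1^*A^{1/2}$, $X = \ldots$ forces $SX\cdot(\cdot)=0$, one peels off the outer factors. The crispest route: from $(K_1^*A^{1/2})T(C^{1/2}K_2)=0$, multiply left by $A^{1/2}K_1$ to get $A^{1/2}K_1K_1^*A^{1/2}\cdot TC^{1/2}K_2 = 0$; since $A^{1/2}K_1K_1^*A^{1/2} = (K_1^*A^{1/2})^*(K_1^*A^{1/2})$ is positive with the same kernel as $K_1^*A^{1/2}$, i.e. the same range projection on its support, and $A^{1/2}$ has the same range as $A$, one concludes $A K_1 B K_2^* C^{1/2} \cdot (C^{1/2}K_2) \cdot(\ldots)$...

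Let me streamline: the cleanest argument uses the elementary fact that for any matrices $X,Y$, $X^*XY = 0 \iff XY = 0$ (since $X^*XY=0\Rightarrow Y^*X^*XY=0\Rightarrow (XY)^*(XY)=0\Rightarrow XY=0$), and symmetrically $YXX^* = 0\iff YX = 0$. Apply this twice. Writing $K_1^*AK_1 = (A^{1/2}K_1)^*(A^{1/2}K_1)$ and $K_2^*CK_2 = (C^{1/2}K_2)^*(C^{1/2}K_2)$, the hypothesis reads $(A^{1/2}K_1)^*(A^{1/2}K_1)\,B\,(C^{1/2}K_2)^*(C^{1/2}K_2) = 0$. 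Using $X^*XZ=0\iff XZ=0$ with $X = A^{1/2}K_1$ and $Z = B(C^{1/2}K_2)^*(C^{1/2}K_2)$ gives $A^{1/2}K_1 B (C^{1/2}K_2)^*(C^{1/2}K_2) = 0$; then using $ZYY^*=0\iff ZY=0$ (where now $Y^* = C^{1/2}K_2$) gives $A^{1/2}K_1BK_2^*C^{1/2} = 0$; multiplying on the left by $A^{1/2}$ and on the right by $C^{1/2}$ gives $AK_1BK_2^*C = 0$, as desired. For part (b), since each $X_i$ and $Y_j$ is positive, $\sum X_i$ and $\sum Y_j$ are positive, and $\bigl[\sum X_i\bigr] = \bigvee[X_i]$ by the first claim proved inside Lemma \ref{basiclemma}; so $\bigl(\sum X_i\bigr)A\bigl(\sum Y_j\bigr)=0$ iff $\bigl[\sum X_i\bigr]A\bigl[\sum Y_j\bigr]=0$ iff $[X_i]A[Y_j]=0$ for all $i,j$ (because a product of the form $PAQ$ with $P = \bigvee P_i$, $Q = \bigvee Q_j$ sums of projections vanishes iff each $P_iAQ_j$ does — expand $P$ and $Q$ over the spans of the ranges) iff $X_iAY_j=0$ for all $i,j$, again using that $X_i, Y_j$ are positive so multiplication by them versus by their range projections is interchangeable as far as nullity of the sandwiched product is concerned.

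I would organize the write-up as: first state the elementary cancellation fact $X^*XZ=0\iff XZ=0$; then derive (a) by the two-fold application above; then derive (b) from the range-projection observation in Lemma \ref{basiclemma} together with the same cancellation fact and the easy sub-lemma that $\bigl(\bigvee P_i\bigr)A\bigl(\bigvee Q_j\bigr) = 0$ iff $P_iAQ_j=0$ for all $i,j$. The main obstacle — really the only nonroutine point — is getting the bookkeeping right in part (a): one must peel off \emph{two} positive factors on opposite sides without accidentally needing $B$ to be positive, which the $X^*XZ=0\iff XZ=0$ trick handles cleanly in either order. Everything else is routine manipulation with range projections and the positivity facts already established in the proof of Lemma \ref{basiclemma}.
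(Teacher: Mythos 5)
Your final, streamlined argument is correct. For part (a) it is essentially the paper's proof: both rest on writing $K_1^*AK_1=(A^{1/2}K_1)^*(A^{1/2}K_1)$ and cancelling via $T^*T=0\Rightarrow T=0$ once on each side; the paper phrases the first cancellation as $D^*K_1^*AK_1D=(A^{1/2}K_1D)^*(A^{1/2}K_1D)=0$ with $D=BK_2^*CK_2$ and then repeats the argument on the adjoint, which is the same bookkeeping as your two applications of $X^*XZ=0\iff XZ=0$. For part (b) you take a genuinely different, though equally valid, route: you pass to range projections, invoke $\bigl[\sum X_i\bigr]=\bigvee[X_i]$ from the first claim in the proof of Lemma \ref{basiclemma}, and use the elementary fact that $\bigl(\bigvee P_i\bigr)A\bigl(\bigvee Q_j\bigr)=0$ iff every $P_iAQ_j=0$. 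The paper instead argues directly that $(X_1+X_2)B=0$ with $X_1,X_2\ge 0$ forces $X_1B=X_2B=0$, because $B^*(X_1+X_2)B=0$ is a vanishing sum of two positive terms; that version is shorter and self-contained, while yours makes the link to joins of range projections explicit and reuses machinery already established. One editorial remark: the exploratory first half of your part (a) discussion (the trace detour and the unfinished ``peeling'' computations) should be deleted from any final write-up, since only the two-fold cancellation argument at the end is needed.
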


\begin{proof}
(a) The reverse implication is trivial. For the forward implication
let $D = BK_2^*CK_2$ and suppose $K_1^*AK_1D = 0$. Then
$$0 = D^*K_1^*AK_1D = (A^{1/2}K_1D)^*(A^{1/2}K_1D),$$
so $A^{1/2}K_1D = 0$ and therefore $AK_1D = 0$, i.e., $AK_1BK_2^*CK_2 = 0$.
Applying the same argument to the adjoint of the expression
$AK_1BK_2^*CK_2$ then yields the conclusion $AK_1BK_2^*C = 0$.

(b) Again, the reverse implication is trivial. For the forward implication,
we claim that if $(X_1 + X_2)B = 0$ with $X_1, X_2 \geq 0$ then
$X_1B = X_2B = 0$. This inductively implies the same statement with
any finite number of $X_i$'s. Taking $B = A(\sum Y_j)$ in the statement
of the lemma then yields $X_iA(\sum Y_j) = 0$ for all $i$, and applying
the same argument to the adjoint of each of these expressions produces
the desired conclusion.

To verify the claim, suppose $(X_1 + X_2)B = 0$. Then
$$0 = B^*(X_1 + X_2)B = B^*X_1B + B^*X_2B,$$
and since both $B^*X_1B$ and $B^*X_2B$ are positive, this implies
that both are zero. It follows that $X_1B = X_2B = 0$, as claimed.
\end{proof}

\begin{theo}\label{concretepb}
Let $\mathcal{M} \subseteq M_m$ and $\mathcal{N} \subseteq M_n$ be
unital $*$-subalgebras and let $\Phi: \mathcal{M} \to \mathcal{N}$ be
a CP map given by $\Phi: B \mapsto \sum_{i=1}^d K_iBK_i^*$. Suppose
$\mathcal{V} \subseteq M_m$ is a concrete quantum relation on
$\mathcal{M}$. Then its pushforward is concretely given as the
$\mathcal{N}'$-$\mathcal{N}'$ bimodule generated by
$$\{K_iAK_j^*: A \in \mathcal{V},\, 1 \leq i,j \leq d\}.$$
\end{theo}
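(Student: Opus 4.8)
The plan is to prove this by a ``test against projections'' argument, leveraging Theorem~\ref{axioms} so that it suffices to show two intrinsic quantum relations agree. Specifically, let $\overrightarrow{\mathcal{V}}$ denote the $\mathcal{N}'$-$\mathcal{N}'$ bimodule generated by $\{K_iAK_j^*\}$ (one first checks this really is such a bimodule, which is routine since $\mathcal{V}$ is an $\mathcal{M}'$-$\mathcal{M}'$ bimodule and the $K_i^*K_j$ interact appropriately with $\mathcal{N}'$; alternatively one notes that any bimodule generated by a set is automatically a bimodule). Let $(\overrightarrow{\mathcal{R}}_k)$ be the intrinsic pushforward from Definition~\ref{pushforward} and let $(\mathcal{R}'_k)$ be the intrinsic quantum relation attached to $\overrightarrow{\mathcal{V}}$ via Proposition~\ref{keylemma}(b), i.e.\ $(P,Q) \in \mathcal{R}'_k$ iff $P(C\otimes I_k)Q \neq 0$ for some $C \in \overrightarrow{\mathcal{V}}$. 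The goal is $(\overrightarrow{\mathcal{R}}_k) = (\mathcal{R}'_k)$ for all $k$; by Theorem~\ref{axioms} this identifies the concrete pushforward with $\overrightarrow{\mathcal{V}}$ and simultaneously confirms Definition~\ref{pushforward} is well-posed.

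The core computation is to unwind ``$(P,Q) \in \overrightarrow{\mathcal{R}}_k$''. By definition this says $([\Phi^*(P)],[\Phi^*(Q)]) \in \mathcal{R}_k$, where $\Phi^* = \Psi$ is the CP map $B \mapsto \sum_i K_i^* B K_i$ (applied entrywise), i.e.\ $\Psi(P) = \sum_i (K_i^*\otimes I_k) P (K_i\otimes I_k)$, and $\mathcal{R}_k$ is the intrinsic relation of $\mathcal{V}$. Since whether two positive matrices are connected depends only on their range projections (the remark preceding Proposition~\ref{keylemma}), $([\Psi(P)],[\Psi(Q)]) \in \mathcal{R}_k$ is equivalent to $\Psi(P)(A\otimes I_k)\Psi(Q) \neq 0$ for some $A \in \mathcal{V}$. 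Expanding, this is $\sum_{i,j}(K_i^*\otimes I_k)P(K_i\otimes I_k)(A\otimes I_k)(K_j^*\otimes I_k)Q(K_j\otimes I_k) \neq 0$. Here I would invoke Lemma~\ref{poslem}(b) with the positive matrices $X_i = (K_i^*\otimes I_k)P(K_i\otimes I_k)$ and $Y_j = (K_j^*\otimes I_k)Q(K_j\otimes I_k)$ to see the double sum is nonzero iff $X_i(A\otimes I_k)Y_j \neq 0$ for some $i,j$; and then Lemma~\ref{poslem}(a) (with the roles of the compressing matrices being $K_i\otimes I_k$ and $K_j\otimes I_k$, and the positive matrices being $P$ and $Q$) to strip the outer $K_i^*\otimes I_k$ and $K_j\otimes I_k$, reducing to $P(K_i\otimes I_k)(A\otimes I_k)(K_j^*\otimes I_k)Q \neq 0$ for some $i,j$, i.e.\ $P(K_iAK_j^*\otimes I_k)Q \neq 0$.

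It remains to connect this with $(P,Q) \in \mathcal{R}'_k$, i.e.\ $P(C\otimes I_k)Q\neq 0$ for some $C$ in the bimodule generated by the $K_iAK_j^*$. One inclusion is immediate: each $K_iAK_j^*$ lies in $\overrightarrow{\mathcal{V}}$, so if $P(K_iAK_j^*\otimes I_k)Q \neq 0$ then $(P,Q)\in\mathcal{R}'_k$. The other inclusion is where care is needed: a general $C \in \overrightarrow{\mathcal{V}}$ is a linear combination of terms $B_1 K_i A K_j^* B_2$ with $B_1, B_2 \in \mathcal{N}'$ and $A \in \mathcal{V}$. But $P, Q$ are projections in $M_k(\mathcal{N})$, hence commute with everything in $\mathcal{N}'\otimes I_k$ (since $M_k(\mathcal{N})$'s commutant is $\mathcal{N}'\otimes M_k$, which contains $\mathcal{N}'\otimes I_k$... and more to the point $P$ commutes with $B_1\otimes I_k$). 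So $P(B_1K_iAK_j^*B_2\otimes I_k)Q = (B_1\otimes I_k)P(K_iAK_j^*\otimes I_k)Q(B_2\otimes I_k)$ after moving the $\mathcal{N}'$-factors past $P$ and $Q$; if this is nonzero then $P(K_iAK_j^*\otimes I_k)Q \neq 0$ for the corresponding $i,j$, and since $K_iAK_j^*$ is of the form feeding into $\overrightarrow{\mathcal{R}}_k$, we get $(P,Q)\in\overrightarrow{\mathcal{R}}_k$. Handling linear combinations is slightly delicate but is absorbed by the ``some $A\in\mathcal V$, some $i,j$'' quantifier structure together with bilinearity.

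The main obstacle will be the bookkeeping in this last step: one must argue that connection by a \emph{linear combination} of the generating matrices (further conjugated by $\mathcal{N}'$ elements) implies connection by a single generator $K_iAK_j^*$. The clean way is to observe that $\mathcal V$ being an $\mathcal M'$-$\mathcal M'$ bimodule means $A$ already ranges over a linear space, so $K_i A K_j^*$ for fixed $i,j$ ranges over a linear space; then the $\mathcal N'$-conjugations are handled by commuting $\mathcal N'\otimes I_k$ past the projections $P,Q$ as above; and finally the sum over $i,j$ is handled by Lemma~\ref{poslem}(b) exactly as in the forward computation, so the two quantifier structures match up. I would organize the proof to run the equivalences in Lemma~\ref{poslem} in a single chain so that both inclusions fall out at once, as the excerpt's remark ``cf.\ the proof of Theorem~\ref{concretepb}'' suggests the authors intend.
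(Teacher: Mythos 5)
Your proposal is correct and follows essentially the same route as the paper: a chain of equivalences that uses the commutation of $P,Q$ with $\mathcal{N}'\otimes I_k$ to reduce connection by the bimodule to connection by a single generator $K_iAK_j^*$, and then Lemma~\ref{poslem}(a) and (b) to convert $P(K_iAK_j^*\otimes I_k)Q\neq 0$ into $\Phi^*(P)(A\otimes I_k)\Phi^*(Q)\neq 0$, the paper merely running the chain in the opposite direction. (One minor slip: the commutant of $M_k(\mathcal{N})=\mathcal{N}\otimes M_k$ is $\mathcal{N}'\otimes \mathbb{C}I_k$ rather than $\mathcal{N}'\otimes M_k$, but the fact you actually use --- that $P$ commutes with $B_1\otimes I_k$ --- is correct.)
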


\begin{proof}
Let $\mathcal{W}$ be the $\mathcal{N}'$-$\mathcal{N}'$ bimodule generated by
the matrices $K_iAK_j^*$ for $A \in \mathcal{V}$ and $1 \leq i,j \leq d$. We
must show that for any $k \in \mathbb{N}$ and any projections
$P,Q \in M_k(\mathcal{N})$, we have $P(B\otimes I_k)Q \neq 0$ for some
$B \in \mathcal{W}$ if and only if
$[\Phi^*(P)](A\otimes I_k)[\Phi^*(Q)] \neq 0$
for some $A \in \mathcal{V}$.

Since $P$ and $Q$ commute with anything in $\mathcal{N}'\times I_k$,
the condition
$$P(B\otimes I_k)Q \neq 0\mbox{ for some }B \in \mathcal{W}$$
obtains if and only if
$$P(K_iAK_j^*\otimes I_k)Q \neq 0\mbox{ for some }A \in \mathcal{V}
\mbox{ and some }i,j.$$
Equivalently,
$$P(K_i\otimes I_k)(A\otimes I_k)(K_j^*\otimes I_k)Q \neq 0$$
for some $A \in \mathcal{V}$ and some $i,j$, which by Lemma \ref{poslem} (a)
is equivalent to
$$(K_i^*\otimes I_k)P(K_i\otimes I_k)(A\otimes I_k)
(K_j^*\otimes I_k)Q(K_j^*\otimes I_k) \neq 0$$
for some $A \in \mathcal{V}$ and some $i,j$. Then
since $\Phi^*(P) = \sum (K_i^*\otimes I_k)P(K_i\otimes I_k)$ and
$\Phi^*(Q) = \sum (K_j^*\otimes I_k)Q(K_j\otimes I_k)$, by Lemma \ref{poslem}
(b) the last statement is equivalent to
$$\Phi^*(P)(A\otimes I_k)\Phi^*(Q) \neq 0\mbox{ for some }A \in \mathcal{V},$$
which is trivially equivalent to
$$[\Phi^*(P)](A\otimes I_k)[\Phi^*(Q)] \neq 0\mbox{ for some }A \in \mathcal{V},$$
as desired.
\end{proof}

If $\mathcal{N} = M_n$ then its commutant is the set of scalar matrices,
so that the pushforward described in Theorem \ref{concretepb} is just the
linear span of the matrices $K_iAK_j^*$.

Once we know how to push forward quantum relations, it is easy to say what
the appropriate notion of ``morphism'' should be: if $\mathcal{M}$ and
$\mathcal{N}$ are both equipped with quantum relations $\mathcal{V}$ and
$\mathcal{W}$, then a CPTP map from $\mathcal{M}$ to $\mathcal{N}$ should be
considered a morphism if the pushforward $\overrightarrow{\mathcal{V}}$ of
$\mathcal{V}$ is contained in $\mathcal{W}$. The classical version (which is
recovered as the case where $\mathcal{M} = D_m$ and $\mathcal{N} = D_n$)
would be a classical channel from a set $S$ of size $m$ to a set
$T$ of size $n$ for which the pushforward of a given relation on
$S$ is contained in a given relation on $T$.

Various definitions of quantum graph homomorphisms were proposed in
\cite{OP, RM, S}. Here the term ``homomorphism''
conflicts somewhat with classical usage, where a homomorphism between graphs
is usually taken to be an actual function between the vertex sets, not a
channel which could map vertices to probability distributions.
An actual map between classical sets generalizes in the quantum setting to a
$*$-homomorphism from $\mathcal{N}$ to $\mathcal{M}$.
In the more general setting of CP maps we prefer the term ``CP morphism'':

\begin{defi}\label{cpmorphism}
Let $\mathcal{M} \subseteq M_m$ and $\mathcal{N} \subseteq M_n$ be
unital $*$-subalgebras equipped with intrinsic quantum relations
$(\mathcal{R}_k)$ and $(\mathcal{S}_k)$, respectively. A {\it CP morphism}
from $\mathcal{M}$ to $\mathcal{N}$ is then a CP map
$\Phi: \mathcal{M} \to \mathcal{N}$ with the property that
$\overrightarrow{\mathcal{R}}_k \subseteq \mathcal{S}_k$ for all $k$.
\end{defi}

In terms of concrete quantum relations $\mathcal{V}$ and $\mathcal{W}$ on
$\mathcal{M}$ and $\mathcal{N}$, respectively, the condition would be that
$\overrightarrow{\mathcal{V}} \subseteq \mathcal{W}$, where
$\overrightarrow{\mathcal{V}}$ is the pushforward of $\mathcal{V}$.
In particular, if $\mathcal{M}$ and $\mathcal{N}$ are matrix algebras
and $\mathcal{V}$ and $\mathcal{W}$ are quantum graphs (i.e., operator
systems), the concrete formulation given in Theorem \ref{concretepb} states
that the condition for $\Phi$ to be a CP morphism is
$K_i\mathcal{V}K_j^* \subseteq \mathcal{W}$ for all $i$ and $j$, which is
Stahlke's condition \cite{S}. However, the
$\overrightarrow{\mathcal{R}}_k \subseteq \mathcal{S}_k$ forulation is
manifestly intrinsic.

\section{Pullbacks}

There is also a way to pull quantum relations back via a CP map.
Since we already know how to push quantum relations forward, one
obvious solution is just to push forward using the adjoint map.
This makes perfect sense in the finite-dimensional setting, but it
fails in infinite dimensions when von Neumann algebras can no longer
be identified with their preduals. However, there is an alternative
approach which does generalize to infinite dimensions.
We describe this construction now.

The key question is how to use a CP map $\Psi = \Phi^*: \mathcal{N} \to \mathcal{M}$
to turn a projection in $\mathcal{M}$ into a projection in $\mathcal{N}$.
We can do this using hereditary cones. A {\it hereditary cone} in $\mathcal{M}$
is a nonempty set $\mathcal{C}$ of positive matrices in $\mathcal{M}$
with the properties
\smallskip

{\narrower{
\noindent (i) if $A \in \mathcal{C}$ then $aA \in \mathcal{C}$ for all
$a \geq 0$

\noindent (ii) if $A, B \in \mathcal{C}$ then $A + B \in \mathcal{C}$

(iii) if $A \in \mathcal{C}$ and $0 \leq B \leq A$ then
$B \in \mathcal{C}$.
\smallskip}}

\noindent If $P$ is a projection in $\mathcal{M}$ then $\mathcal{C}_P =
\{A \in \mathcal{M}: A \geq 0$ and $PA = 0\}$ is a hereditary cone, and
it is not hard to check that every hereditary cone in $\mathcal{M}$ has
this form. As it is easy to see that the inverse image under any CP
map $\Psi: \mathcal{N} \to \mathcal{M}$ of a hereditary cone in $\mathcal{M}$
is a hereditary cone in $\mathcal{N}$, this shows us how to use $\Psi$ to
turn a projection $P \in \mathcal{M}$ into a projection
$\overleftarrow{\Psi}(P)$ in $\mathcal{N}$: take $\overleftarrow{\Psi}(P)$
to satisfy $\mathcal{C}_{\overleftarrow{\Psi}(P)} = \Psi^{-1}(\mathcal{C}_P)$.

We can now define the pushforward of a quantum relation via a CP map. We
continue to abuse notation by letting $\overleftarrow{\Psi}$ also denote the
matrix level map which takes projections in $M_k(\mathcal{M})$ to projections
in $M_k(\mathcal{N})$.

\begin{defi}\label{pullback}
Let $\mathcal{M} \subseteq M_m$ and $\mathcal{N} \subseteq M_n$ be
unital $*$-subalgebras and let $\Phi: \mathcal{M} \to \mathcal{N}$ be
a CP map. Suppose $(\mathcal{S}_k)$ is an intrinsic quantum relation on
$\mathcal{N}$. Then its {\it pullback} is the intrinsic quantum relation
$(\overleftarrow{\mathcal{S}}_k)$ on $\mathcal{M}$ defined by, for each
$k \in \mathbb{N}$, letting $(P,Q)$ belong to $\overleftarrow{\mathcal{S}}_k$
if $(\overleftarrow{\Psi}(P), \overleftarrow{\Psi}(Q))$
belongs to $\mathcal{S}_k$, where $\Psi = \Phi^*$.
\end{defi}

As with pushforwards, we must justify this definition by showing that
$(\overleftarrow{\mathcal{S}}_k)$ satisfies the axioms for an intrinsic
quantum relation, and as in that case we will accomplish this by
identifying the concrete quantum relation that corresponds to
$(\overleftarrow{\mathcal{S}}_k)$.

\begin{theo}\label{concretepf}
Let $\mathcal{M} \subseteq M_m$ and $\mathcal{N} \subseteq M_n$ be
unital $*$-subalgebras and let $\Phi: \mathcal{M} \to \mathcal{N}$ be
a CP map given by $\Phi: B \mapsto \sum_{i=1}^d K_iBK_i^*$. Suppose
$\mathcal{W} \subseteq M_n$ is a concrete quantum relation on
$\mathcal{N}$. Then its pullback is concretely given as the
$\mathcal{M}'$-$\mathcal{M}'$ bimodule generated by
$$\{K_i^*BK_j: B \in \mathcal{W},\, 1 \leq i,j \leq d\}.$$
\end{theo}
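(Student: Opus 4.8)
The strategy is to mirror the proof of Theorem~\ref{concretepb} as closely as possible, exploiting the near-symmetry between pushforward along $\Phi$ and pullback along $\Phi$. Write $\Psi = \Phi^*$, so $\Psi(B) = \sum_i K_i^* B K_i$, and let $\mathcal{V} \subseteq M_m$ denote the $\mathcal{M}'$-$\mathcal{M}'$ bimodule generated by the matrices $K_i^* B K_j$ with $B \in \mathcal{W}$ and $1 \le i,j \le d$. By Theorem~\ref{axioms} it suffices to check that the intrinsic quantum relation associated to $\mathcal{V}$ agrees with $(\overleftarrow{\mathcal{S}}_k)$ as defined in Definition~\ref{pullback}; that is, for each $k$ and each pair of projections $P, Q \in M_k(\mathcal{M})$, we want
\[
P(A \otimes I_k)Q \neq 0 \text{ for some } A \in \mathcal{V}
\quad\Longleftrightarrow\quad
(\overleftarrow{\Psi}(P), \overleftarrow{\Psi}(Q)) \in \mathcal{S}_k.
\]
As in Theorem~\ref{concretepb}, since $P$ and $Q$ commute with $\mathcal{M}' \otimes I_k$, the left-hand side is equivalent to $P(K_i^* B K_j \otimes I_k)Q \neq 0$ for some $B \in \mathcal{W}$ and some $i,j$, i.e. $P(K_i^* \otimes I_k)(B \otimes I_k)(K_j \otimes I_k)Q \neq 0$ for some such $B, i, j$.

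The technical heart is then to connect the matrices $P(K_i^* \otimes I_k)$ and $(K_j \otimes I_k)Q$ to the range projections $\overleftarrow{\Psi}(P)$ and $\overleftarrow{\Psi}(Q)$ in $M_k(\mathcal{N})$. The bridge is the hereditary-cone description: $\overleftarrow{\Psi}(P)$ is characterized by $\mathcal{C}_{\overleftarrow{\Psi}(P)} = \Psi_k^{-1}(\mathcal{C}_P)$, where $\Psi_k$ is the entrywise map on $M_k(\mathcal{N})$. The plan is to show that the range of $\overleftarrow{\Psi}(P)$ is precisely the closed span of the ranges of all $(K_i \otimes I_k)^* P$; equivalently, for a positive $A \in M_k(\mathcal{N})$ one has $\overleftarrow{\Psi}(P)\, A = 0$ iff $\Psi_k(A) \perp \operatorname{ran}(P)$ iff $P\,\Psi_k(A) = 0$ iff $\sum_i P(K_i^* \otimes I_k)A(K_i \otimes I_k) = 0$ iff (by Lemma~\ref{poslem}(b), since each summand is positive) $P(K_i^* \otimes I_k)A(K_i \otimes I_k) = 0$ for every $i$, iff $(K_i \otimes I_k)(K_i^* \otimes I_k)^{1/2}\cdots$— more cleanly, by the argument of Lemma~\ref{poslem}(a), iff $A(K_i \otimes I_k)^* \overleftarrow{\Psi}(P)$ vanishes appropriately. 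Running this characterization on both $P$ and $Q$, together with Lemma~\ref{poslem}, should collapse the displayed chain of equivalences down to $\overleftarrow{\Psi}(P)(B' \otimes I_k)\overleftarrow{\Psi}(Q) \neq 0$ for some $B' \in \mathcal{W}$, which is exactly the statement $(\overleftarrow{\Psi}(P), \overleftarrow{\Psi}(Q)) \in \mathcal{S}_k$.

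A preliminary step that must not be skipped is verifying that $\mathcal{V}$, as defined, really is a concrete quantum relation on $\mathcal{M}$, i.e. an $\mathcal{M}'$-$\mathcal{M}'$ bimodule — but this is immediate since we explicitly took the bimodule generated by the given matrices. One should also note $\overleftarrow{\Psi}(I) = I$ (the cone $\mathcal{C}_I = \{0\}$ pulls back to $\{0\}$ since $\Psi$ is unital... actually $\Phi$ CPTP means $\Psi = \Phi^*$ is unital, so $\Psi(I) = I$ and $\Psi^{-1}(\{0\})$ consists of the positive $A$ with $\Psi(A)=0$, which by faithfulness of a unital CP map's... ) — this kind of edge case should be handled but is routine. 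The main obstacle I anticipate is the identification $\operatorname{ran}(\overleftarrow{\Psi}(P)) = \overline{\operatorname{span}}\{\operatorname{ran}((K_i \otimes I_k)^* P) : i\}$: translating the hereditary-cone definition of $\overleftarrow{\Psi}(P)$ into this concrete span requires carefully tracking which positive matrices $A$ satisfy $\Psi_k(A) P = 0$ and then using the positivity lemmas to split the sum $\sum_i K_i^* (\cdot) K_i$ term by term, exactly as in the $[K^*AK] = [K^*BK]$ step of Lemma~\ref{basiclemma} and in Lemma~\ref{poslem}. Once that dictionary is in place, the rest is a mechanical rerun of the Theorem~\ref{concretepb} computation with the roles of $K_i$ and $K_i^*$ interchanged.
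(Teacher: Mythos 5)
Your plan follows the paper's proof essentially step for step: reduce membership in the bimodule generated by the $K_i^*BK_j$ to the generators themselves using commutation of $P,Q$ with $\mathcal{M}'\otimes I_k$, identify $\overleftarrow{\Psi}(P)$ concretely from the hereditary-cone definition via Lemma \ref{poslem}, and chain the resulting equivalences down to $\overleftarrow{\Psi}(P)(B\otimes I_k)\overleftarrow{\Psi}(Q)\neq 0$. The one correction needed is in your stated ``dictionary'': the identification should be $\overleftarrow{\Psi}(P)=\bigvee_i\,[(K_i\otimes I_k)P]$, the join of the range projections of the maps $(K_i\otimes I_k)P\colon\mathbb{C}^{mk}\to\mathbb{C}^{nk}$, not the span of the ranges of $(K_i\otimes I_k)^*P$ (which does not even typecheck when $m\neq n$); your own chain of equivalences --- $\overleftarrow{\Psi}(P)A=0$ iff $P\Psi_k(A)P=0$ iff each $P(K_i^*\otimes I_k)A(K_i\otimes I_k)P=0$ iff each $A(K_i\otimes I_k)P=0$ --- delivers exactly this corrected statement, after which the rest goes through as you describe (the digression about $\overleftarrow{\Psi}(I)$ and unitality is not needed, since the theorem is stated for general CP maps).
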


\begin{proof}
Fix $k \in \mathbb{N}$ and $P,Q \in M_k(\mathcal{M})$. Let $\Psi = \Phi^*$.
We first claim that
$\overleftarrow{\Psi}(P) = \bigvee [(K_i\otimes I_k)P]$. To see this,
recall that a positive matrix $A \in M_k(\mathcal{N})$ belongs to
$\mathcal{C}_{\overleftarrow{\Psi}(P)}$ if and only if $\Psi(A)P = 0$.
But $\Psi(A) = \sum (K_i^*\otimes I_k)A(K_i\otimes I_k)$, so by Lemma
\ref{poslem} we have $\Psi(A)P = 0$ if and only if $A(K_i\otimes I_k)P = 0$
for all $i$. The claim now follows from the definition of
$\mathcal{C}_{\overleftarrow{\Psi}(P)}$.

Now the condition for $(P,Q)$ to belong to the pullback of the
intrinsic quantum relation associated to $\mathcal{W}$ is that
$$\overleftarrow{\Psi}(P)(B\otimes I_k)
\overleftarrow{\Psi}(Q) \neq 0$$
for some $B \in \mathcal{W}$. By the claim and Lemma \ref{poslem} (b),
this happens if and only if
$$[(K_i\otimes I_k)P](B\otimes I_k)[(K_j\otimes I_k)Q] \neq 0$$
for some $B \in \mathcal{W}$ and some $i,j$. This equivalent to saying that
$$P(K_i^*\otimes I_k)(B\otimes I_k)(K_j\otimes I_k)Q \neq 0,$$
i.e.,
$$P(K_i^*BK_j\otimes I_k)Q \neq 0,$$
for some $B \in \mathcal{W}$. Since $P$ and $Q$ commute with $A\otimes I_k$
for every $A \in \mathcal{M}'$, this last condition is equivalent to the
statement that $(P,Q)$ belongs to the intrinsic quantum relation associated
to the $\mathcal{M}'$-$\mathcal{M}'$-bimodule generated by the matrices
$K_i^*BK_j$. We conclude that the latter bimodule is the concrete form of
the pullback of $\mathcal{W}$.
\end{proof}

Again, in the case where $\mathcal{M} = M_m$, this pullback would
simply be the linear span of the matrices $K_i^*BK_j$.

The pullback construction gives rise to an alternative version of CP
morphism which is weaker than the one proposed in Definition \ref{cpmorphism}.
Namely, instead of requiring
$\overrightarrow{\mathcal{R}}_k \subseteq \mathcal{S}_k$ for all $k$ we
could require $\mathcal{R}_k \subseteq \overleftarrow{\mathcal{S}}_k$ for
all $k$. In concrete terms, the condition that $K_i\mathcal{V}K_j^*
\subseteq \mathcal{W}$ for all $i$ and $j$ is replaced by the condition
that $\mathcal{V} \subseteq \sum_{i,j}K_i^*\mathcal{W}K_j$. The second
condition is implied by the first (multiply the first condition on the
left by $K_i^*$ and on the right by $K_j$, then sum over $i$ and $j$ and
invoke the identity $\sum K_i^*K_i = I_m$). Classically, the first version
demands that if the point $x$ is related to the point
$y$ then $x'$ must be related to $y'$ for any $x'$ and $y'$ such that
the transition probabilities $x \to x'$ and $y \to y'$ are both nonzero,
while the second version asks only that there be at least one such pair
$(x',y')$.

\section{Appendix: superselection rules}

A central feature of quantum mechanics is the possibility of forming
superpositions of states, famously illustrated by the parable of
Schrodinger's cat. However, not all superpositions are physically
allowed. For instance, in elementary quantum mechanics one cannot
prepare an isolated system in a superposition of two states in which different
numbers of particles are present, or whose total charges are different. Such
restrictions are known as ``superselection rules''. In their presence the
Hilbert space of the system will decompose into an orthogonal sum of
``superselection sectors'', subspaces within which all pairs of states
can be superposed.

In particular, one cannot perform a measurement on an isolated system which
could result in the system being in a forbidden superposition. This means that
all physical observables are restricted to have a block diagonal form such
that every eigenspace lies within some superselection sector.
Thus, the physical observables should typically be modelled not by arbitrary
self-adjoint matrices in $M_m$, but rather by self-adjoint matrices belonging
to a fixed unital $*$-subalgebra of $M_m$. The significance of superselection
rules in quantum information theory has been studied in \cite{BW, JWBVP, KMP}.

A qualification is in order here. When one is encoding information in a
quantum mechanical system, superselection rules are only absolute if the
system is isolated. Thus, in principle, one could achieve a ``forbidden''
superposition by first coupling the system of interest to an external system,
then preparing the composite system in the desired superposition, then
discarding the external system without measuring it. That is, if there is
a rule forbidding superposition of the states $|\alpha\rangle$ and
$|\alpha'\rangle$ in the first system when it is isolated, one finds states
$|\beta\rangle$ and $|\beta'\rangle$ of the second system
such that superposition of $|\alpha\rangle|\beta\rangle$ and
$|\alpha'\rangle|\beta'\rangle$ is not forbidden. However, even if possible
in principle, this strategy may not be feasible in practice. Indeed, the
whole reason why ordinary communication is classical is exactly because one is
practically unable to create quantum superpositions of macroscopic objects.
Thus, operative superselection rules may arise not from fundamental physics
but from limitations of the experimental apparatus.

For example, suppose that a sender populates a potential well with $n$
hydrogen atoms and transmits the system to a recipient, with the transmitted
information being simply the value of $n$. If the system is isolated then
states with different values of $n$ definitely cannot be superposed, but even
if this restriction is relaxed the sender might not have the ability to create
such superpositions. If so, this information is classical.

To make this example more interesting, suppose the sender has the ability to
prepare the atoms in desired spin states and wants to encode information in
this way. States of the system with different values of $n$ still cannot be
superposed, but states with the same value of $n$ can be. In this case the
relevant $*$-algebra $\mathcal{M}$ is a direct sum
$\bigoplus M_{2^n}$ because there are $2^n$ possible basic spin states
in a system with $n$ atoms.

Another way that $\mathcal{M}$ could be neither $M_m$ nor $D_m$ is if the
transmitted information comes in two parts, a classical part and a quantum
part. In that case we would have $\mathcal{M} = M_m\otimes D_k$ where $m$
is the number of degrees of freedom of the quantum part of the message and
$k$ is the number of degrees of freedom of the classical part. A situation
of this type could arise if, say, information is encoded in an array of
heavy atoms using both their spin states and their centers of mass.
(The $i$th atom might be placed at one of two locations $(i,1)$ or $(i,2)$
in a $k\times 2$ grid, for example.) Models in which the spin of an atom is
treated quantum mechanically while its center of mass is treated classically
are familiar from standard analyses of the Stern-Gerlach experiment.

How would basic notions from quantum error correction generalize to the
mixed setting? Given the abstract formulations presented earlier, this
question is easy to answer. The situation would be that we have unital
$*$-algebras $\mathcal{M} \subseteq M_m$ and $\mathcal{N} \subseteq M_n$
and a CPTP map $\Phi: \mathcal{M} \to \mathcal{N}$. The quantum confusability
graph would then be the pullback along $\Phi$ of the diagonal quantum relation
on $\mathcal{N}$, i.e., its commutant $\mathcal{N}'$. If $\Phi$ has the
form $\Phi: B \mapsto \sum_{i=1}^d K_iBK_i^*$, then according to Theorem
\ref{concretepf} this pullback would be the $\mathcal{M}'-\mathcal{M}'$
bimodule generated by $\{K_i^*BK_j: B \in \mathcal{N}'\}$.

As a simple special case, suppose $\mathcal{M} = M_m\otimes D_{m'} \subseteq
M_{mm'}$ and $\mathcal{N} = M_n\otimes D_{n'} \subseteq M_{nn'}$. Then
$\mathcal{M}' = I_m\otimes D_{m'}$ and $\mathcal{N}' = I_n\otimes D_{n'}$,
and the quantum confusability graph is the $\mathcal{M}'-\mathcal{M}'$
bimodule generated by $\{K_i^*(I_n\otimes B)K_j: B \in D_{n'}\}$. This
expression can be made more explicit by if the Kraus matrices are chosen
in a natural way. Namely, for each $1 \leq a \leq m$ and $1 \leq b \leq n$
the map $\Phi$ induces a CP map from $M_m \cong M_m\otimes E_{aa} \subseteq
M_{mm'}$ to $M_n \cong M_n\otimes E_{bb} \subseteq M_{nn'}$, where $E_{aa}$
is the $m'\times m'$ matrix with a $1$ in the $(a,a)$ entry and $0$'s
elsewhere and $E_{bb}$ is the $n'\times n'$ matrix with a $1$ in the
$(b,b)$ entry and $0$'s elsewhere. Thus, for each $a$ and $b$ we can
find a family of $n\times m$ Kraus matrices $K_i^{ab}$ such that $\Phi(B)
= \sum_{a,b,i} (K_i^{ab}\otimes E_{aa})B(K_i^{ab}\otimes E_{aa})^*\otimes
E_{bb}$, and the condition that $\Phi$ be trace preserving is
$\sum_{a,b,i} (K_i^{ab})^*K_i^{ab}\otimes E_{aa} = I_{mm'}$, or equivalently,
$\sum_{b,i} (K_i^{ab})^*K_i^{ab} = I_m$ for each $a$ (a version of
stochasticity). With these Kraus
matrices, the quantum confusability graph is the operator system $\mathcal{V}$
spanned by the matrices $(K_i^{ab})^*K_j^{a'b}\otimes E_{aa'} \in M_{mm'}$ for
arbitrary $a$, $a'$, $b$, $i$, and $j$,
which is automatically an $\mathcal{M}'-\mathcal{M}'$ bimodule.

In this setting a quantum code would be a projection $E \in \mathcal{M}$
such that $E\mathcal{V}E = I_m\otimes D_{m'}$, where $\mathcal{V}$ is the
quantum confusability graph just described. Concretely, we can write
$E = \sum P_a\otimes E_{aa}$ where each $P_a$ is a projection in $M_m$, and
the error correction conditions would state that (1)
for each $a$, $b$, $i$, and $j$ the matrix $P_a(K_i^{ab})^*K_j^{ab}P_a$ is a
scalar multiple of $P_a$, and (2) for each $a \neq a'$, $b$, $i$, and $j$ the
matrix $P_a(K_i^{ab})^*K_j^{a'b}P_{a'}$ is zero.

%###

\end{document}